\documentclass[review,onefignum,onetabnum]{siamart190516}


\usepackage{extarrows}
\usepackage{tikz, pgfplots}
\pgfplotsset{compat=newest}
\usepgfplotslibrary{groupplots, statistics}
\usepackage{mdframed}
\usepackage{tabularx}
\usepackage{colortbl}	
\usepackage{xspace}
\usepackage{nicefrac}
\usepackage{placeins}

\usepackage{lipsum}
\usepackage{amsfonts}
\usepackage{graphicx}
\usepackage{epstopdf}
\usepackage{algorithmic}
\ifpdf
  \DeclareGraphicsExtensions{.eps,.pdf,.png,.jpg}
\else
  \DeclareGraphicsExtensions{.eps}
\fi


\newsiamremark{hypothesis}{Hypothesis}
\crefname{hypothesis}{Hypothesis}{Hypotheses}
\crefname{proposition}{Proposition}{Propositions}
\newsiamthm{claim}{Claim}

\crefname{section}{Section}{Sections}

\newtheorem{example}{Example}

\crefname{subsection}{Subsection}{Subsections}

\pgfplotsset{
    boxplot/hide outliers/.code={
        \def\pgfplotsplothandlerboxplot@outlier{}%
    }
}

\headers{Convexification via monomial patterns}{A. Averkov, B. Peters, and S. Sager}

\title{Convexification of box-constrained polynomial optimization problems via monomial patterns \thanks{
\funding{This work was funded by the the Deut\-sche For\-schungs\-
ge\-mein\-schaft (DFG, German Research Foundation) - 314838170, GRK 2297
MathCoRe.}}}

\author{Gennadiy Averkov\thanks{Fakultät 1, Brandenburgische Technische Universität Cottbus-Senftenberg, Germany
  (\email{averkov@b-tu.de}).}
\and Benjamin Peters\thanks{Fakultät für Mathematik, Otto-von-Guericke Universität Magdeburg, Germany 
  (\email{benjamin.peters@ovgu.de}, \email{sager@ovgu.de}).}
\and Sebastian Sager\footnotemark[3]}

\usepackage{amsopn}


\definecolor{wheat}{rgb}{0.96,0.87,0.70}

\newcommand{\yalmip}{\texttt{YALMIP}\xspace}
\newcommand{\cstssos}{\texttt{CS-TSSOS}\xspace}
\newcommand{\baron}{\texttt{BARON}\xspace}
\newcommand{\julia}{\texttt{JULIA}\xspace}
\newcommand{\mosek}{\texttt{MOSEK}\xspace}
\newcommand{\matlab}{\texttt{MATLAB}\xspace}

\newcommand{\lvlL}{\cellcolor[gray]{0.9}}


\newcommand{\TS}{\operatorname{TS}}
\newcommand{\ML}{\operatorname{ML}}
\newcommand{\CH}{\operatorname{CH}}

\newcommand{\BF}{\operatorname{BF}}


\newcommand{\lb}{\left(}
\newcommand{\rb}{\right)}

\newcommand{\llb}{\left\{}
\newcommand{\rrb}{\right\}}
\newcommand{\set}[2]{\llb #1 : #2\rrb}
\newcommand{\bsprod}[2]{\left< #1 , #2 \right>}

\DeclareMathOperator{\conv}{conv}

\DeclareMathOperator{\diam}{diam}

\DeclareMathOperator{\md}{d}

\DeclareMathOperator{\m}{m}
\DeclareMathOperator{\supp}{supp}

\newcommand{\0}{\mathbf{0}}
\newcommand{\1}{\mathbf{1}}


\newcommand{\bfc}{\mathbf{c}}

\newcommand{\bfe}{\mathbf{e}}
\newcommand{\bff}{\mathbf{f}}

\newcommand{\bfl}{\mathbf{l}}

\newcommand{\bfu}{\mathbf{u}}
\newcommand{\bfv}{\mathbf{v}}
\newcommand{\bfw}{\mathbf{w}}
\newcommand{\bfx}{\mathbf{x}}
\newcommand{\bfy}{\mathbf{y}}
\newcommand{\bfz}{\mathbf{z}}


\newcommand{\tbff}{\tilde{\mathbf{f}}}

\newcommand{\tbfx}{\tilde{\mathbf{x}}}


\newcommand{\bfM}{\mathbf{M}}

\newcommand{\rma}{\mathrm{a}}
\newcommand{\rmA}{\mathrm{A}}
\newcommand{\rmb}{\mathrm{b}}
\newcommand{\rmB}{\mathrm{B}}
\newcommand{\rmF}{\mathrm{F}}
\newcommand{\rmc}{\mathrm{c}}
\newcommand{\rmd}{\mathrm{d}}

\newcommand{\rmf}{\mathrm{f}}
\newcommand{\rmg}{\mathrm{g}}
\newcommand{\rmh}{\mathrm{h}}
\newcommand{\rmi}{\mathrm{i}}
\newcommand{\rmI}{\mathrm{I}}
\newcommand{\rmj}{\mathrm{j}}
\newcommand{\rmJ}{\mathrm{J}}
\newcommand{\rmk}{\mathrm{k}}
\newcommand{\rmK}{\mathrm{K}}
\newcommand{\rml}{\mathrm{l}}
\newcommand{\rmm}{\mathrm{m}}
\newcommand{\rmn}{\mathrm{n}}
\newcommand{\n}{\mathrm{n}}

\newcommand{\rmP}{\mathrm{P}}

\newcommand{\rmr}{\mathrm{r}}
\newcommand{\rms}{\mathrm{s}}
\newcommand{\rmt}{\mathrm{t}}
\newcommand{\rmu}{\mathrm{u}}
\newcommand{\rmv}{\mathrm{v}}
\newcommand{\rmV}{\mathrm{V}}
\newcommand{\rmw}{\mathrm{w}}
\newcommand{\rmx}{\mathrm{x}}
\newcommand{\rmX}{\mathrm{X}}
\newcommand{\rmy}{\mathrm{y}}

\newcommand{\rec}{\mathrm{rec}}
\newcommand{\trmx}{\tilde{\mathrm{x}}}


\newcommand{\rmC}{\mathrm{C}}

\newcommand{\rmS}{\mathrm{S}}


\newcommand{\mc}{\mathrm{mc}}

\newcommand{\sgl}{\mathrm{sgl}}


\newcommand{\mF}{\mathcal{F}}

\newcommand{\mM}{\mathcal{M}}


\newcommand{\N}{\mathbb{N}}
\newcommand{\Ndd}{\mathbb{N}_{2\rmd}}
\newcommand{\Nn}{\mathbb{N}^\n}
\newcommand{\Nnd}{\mathbb{N}^\n_\rmd}

\newcommand{\Nk}{\mathbb{N}^\rmk}
\newcommand{\Nkd}{\mathbb{N}^\rmk_\rmd}
\newcommand{\Nkdd}{\mathbb{N}^\rmk_{2\rmd}}

\newcommand{\R}{\mathbb{R}}
\newcommand{\Rn}{\mathbb{R}^{\n}}
\newcommand{\RA}{\mathbb{R}^{\rmA}}

\newcommand{\Z}{\mathbb{Z}}
\newcommand{\Zn}{\mathbb{Z}^\n}




\newcommand{\Aex}{\rmA_{\mathrm{ex}}}
\let\Box\relax\DeclareMathOperator{\Box}{Box}
\newcommand{\bs}{\backslash}


\renewcommand{\epsilon}{\varepsilon}
\newcommand{\fex}{f^{\mathrm{ex}}}
\newcommand{\fa}[2][0]{\ensuremath{\hspace{#1 pt}\operatorname{for} \, \operatorname{all}\hspace{#2 pt}}}
\newcommand{\for}[2][0]{\ensuremath{\hspace{#1 pt}\operatorname{for}\hspace{#2 pt}}}






\newcommand{\minimize}[1]{\ensuremath{\operatorname{minimize}\hspace{#1 pt}}}







\newcommand{\st}[2][0]{\ensuremath{\hspace{#1 pt}\operatorname{subject} \, \operatorname{to}\hspace{#2 pt}}}





\newcommand{\xmax}[2]{\ensuremath{\operatorname{\overline{\mathbf{x}}}_{#2}^{#1}}}
\newcommand{\xmin}[2]{\ensuremath{\operatorname{\underline{\mathbf{x}}}_{#2}^{#1}}}


\nolinenumbers
\ifpdf
\hypersetup{
  pdftitle={Convexification via monomial patterns},
  pdfauthor={A. Averkov, B. Peters, and S. Sager}
}
\fi


\externaldocument{ex_supplement}


\begin{document}

\maketitle

\begin{abstract}
  Convexification is a core technique in global polynomial optimization. Currently, there are two main approaches competing in theory and practice: the approach of nonlinear programming and the approach based on positivity certificates from real algebra. 
 	The former are comparatively cheap from a computational point of view, but typically do not provide tight relaxations with respect to bounds for the original problem.
  The latter are typically computationally expensive, but do provide tight relaxations. 
  We embed both kinds of approaches into a unified framework of monomial relaxations. 
  We develop a convexification strategy that allows to trade off the quality of the bounds against computational expenses.
  Computational experiments show very encouraging results.
\end{abstract}

\begin{keywords}
  Convexification, McCormick envelopes, moment problem, nonlinear optimization, polynomial optimization,  sum-of-squares, sparsity
\end{keywords}

\begin{AMS}
  68Q25, 68R10, 68U05
\end{AMS}

\section{Introduction}
\label{sec:intro}

Many important convexification techniques applied to polynomial optimization problems share the following common  distinctive features: in the case of a problem in $\n$ variables $\bfx = (\rmx_1,\ldots,\rmx_\n)$, 
monomials 
\[
	\bfx^\alpha := \rmx_1^{\alpha_1}\cdot\ldots\cdot\rmx_\n^{\alpha_\n}
\]
with $\alpha\in\Nn$ are substituted with \emph{monomial variables} $\rmv_\alpha$ and the relationships among them are captured, exactly or in a relaxed fashion, by systems of convex constraints. In order to describe the relationship between different monomial variables by constraints one needs to introduce additional \emph{auxiliary monomial variables}. 

Different approaches exist on how to pick these auxiliary monomial variables and the respective convex constraints. The ``nonlinear optimization community'' uses monomial variables and constraints such that the resulting relaxations are cheap to compute. 
The resulting poor lower bounds are compensated by solving many relaxations within a branch-and-bound framework. 
The ``polynomial optimization community'' usually aims to solve only one single relaxation, which often produces a very tight bound. This comes at the price of a large number of monomial variables and hard constraints. 
Interestingly, up to now there has been little interaction between the two different schools of thought. The authors believe that a major reason is the lack of a mathematical formalism that would allow a uniform description of different convexification techniques.


One contribution of this paper is the introduction of the notion of \emph{patterns} to fill this gap. Patterns are finite sets $\rmP\subseteq\Nn$ of exponent vectors that are chosen in such a way that the monomial variables $\rmv_{\alpha}$ indexed by $\alpha\in\rmP$ can be linked by constraints that satisfy a given demand on the computability. While various kinds of patterns have been implicitly used by the disjoint research communities, the introduction of the explicit notion of patterns allows for the development of a unifying mathematical language that highlights common ideas. Promoting the elementary notion of patterns enables to see similarities of the different research directions and will help to connect different communities that work independently on the same problems. 

For example, the pattern $\{ (1,0), (0,1), (1,1)\}$ corresponds to the well-known McCormick envelope  \cite{Mitsos2009,Boland2017}, i.e. the convexification of the variables $\rmx_1$ and $\rmx_2$ and their product $\rmx_1 \rmx_2$. Other examples of methods that can be expressed using the notion of patterns are truncated moment relaxation and its dual the sum-of-squares relaxation \cite{lasserre2011,laurent2009sums,marshall2008}, scaled-diagonally-dominant sums of squares \cite{ahmadi2019dsos}, sums of non-negative circuit polynomials \cite{dressler2017,seidler2018experimental}, bound-factor products \cite{dalkiran2013boundfactor} and their dual Handelman's hierarchy \cite{handelman1988representing}, multilinear intermediates \cite{bao2015global}, polyhedral outer approximations \cite{tawarmalani2005polyhedral} as well as expression trees \cite{smith2001symbolic,sherali2013reformulation}.
We propose a flexible template for the relaxation of \emph{box-constrained polynomial optimization problems (pop)} that allows to use the ideas of these until now largely disjoint schools of thought. 
It allows to combine different types of patterns to build convex relaxations of a pop. Our new and more general point of view might also help to understand numerical issues and the facial structures of feasible sets in the aforementioned convexification approaches. 
This, in turn, can be expected to have a positive impact on the improvement of existing and on the development of novel approaches to polynomial optimization.

We address in this paper the case of box-constrained pops.
In nonlinear global optimization, convexification of expressions occurring in constraints and objective functions (with the underlying variables in specified finite ranges) is a widely used technique. 
Since objective functions and constraints can be convexified by the same principles, one could also use our strategy for more general versions of polynomial optimization, with more general sets of constraints. 
On the other hand, developing our strategy into a sound method for general polynomial optimization would require more thought and ideas. 
Therefore, this is out of scope for this paper.
Furthermore, box-constrained subproblems are an essential part of branch-and-bound frameworks such as employed in \baron \cite{sahinidis:baron:17.8.9}.
Thus, in the future one can also try to employ our method developed for box-constrained pops with more general constraints by using them within branch-and-bound frameworks.

We derive various new convexification techniques from the monomial pattern template. The resulting relaxations can be solved by a variety of different numerical approaches. In the interest of analyzing the tightness and computational expenses related to different convexification strategies, we use the interior point solver \mosek.  

The paper is organized as follows. 
The basic notation is given in \cref{sec:basic}. 
In \cref{sec:rlx} the notion of the pattern relaxation is introduced and the separation problem for patterns is formulated as an optimization problem. \cref{sec:exam-discus} is dedicated to the interpretation and discussion of established convexification techniques as monomial patterns. 
Multilinear envelopes are generalized as multilinear patterns. 
In \cref{sec:ts-pat} new pattern types are introduced, which give rise to new algorithmic approaches to pop.  
Computational results in \cref{sec:res} highlight the benefits of our novel approach.
Finally, a conclusion is given in \cref{sec:conc}.
%

\section{Basic Notation}
\label{sec:basic}
$\N$ is the set of natural numbers including zero. 
For integers $\n>0$ and $\rmd\ge 0$ we define $\Nnd:=\set{\alpha\in\Nn}{\alpha_1+\dots+\alpha_\n\le\rmd}$, $[\rmn]:=\{1,\dots,\rmn\}$ and $[\rmn]_0:=[\rmn]\cup\{0\}$. 
Let $\rmA,\rmB\subseteq\Nn$ be nonempty, finite sets with cardinalities $\#\rmA$ and $\#\rmB$. 
We denote vectors of real numbers with entries indexed by the elements of set $A$ as $\bfv = (\rmv_{\alpha})_{\alpha\in\rmA} \in \RA$. 
Note that $\RA$ is isomorphic to $\R^{\#\rmA}$. 
We define the bilinear product of two such vectors $\bfv=(\rmv_{\alpha})_{\alpha\in\rmA}\in\RA$ and $\bfw=(\rmw_{\alpha})_{\alpha\in\rmB}\in\R^\rmB$  as
\[
  \langle\bfv,\bfw\rangle:=\sum\limits_{\alpha\in\rmA\cap\rmB}\rmv_{\alpha}\rmw_{\alpha}.
\]
Furthermore, if $\rmB\subseteq\rmA$, we define the coordinate projection of $\bfv$ onto components indexed by $\rmB$ as
$
  \bfv_{\rmB}:=(\rmv_{\alpha})_{\alpha\in\rmB}.
$
The $\ell_1$ and $\ell_{\infty}$ norms of $\bfv$ are $\Vert\bfv\Vert_{1}$ and $\Vert\bfv\Vert_{\infty}$, respectively. 
Let $\rmX\subseteq\RA$ be a nonempty and compact set. We call
\[
  \diam(\rmX):=\max\limits_{\bfu,\bfz\in\rmX} \Vert \bfu-\bfz \Vert_1,
\]
the \emph{diameter} of $\rmX$ and
\[
  \omega_{\rmX}(\bfc) :=\max\{\bsprod{\bfc}{\bfu} :\bfu\in\rmX \}-\min\{\bsprod{\bfc}{\bfu} :\bfu\in\rmX \}
\]
the \emph{width function} of $\rmX$ in direction $\bfc$.
We define the \emph{support of a vector} $\bfv$ and the \emph{support of a set} $\rmX$ as
\[
  \supp(\bfv):=\{\alpha\in\rmA: \rmv_{\alpha}\not= 0\} \quad\text{and}\quad \supp(\rmX):=\bigcup_{\bfv\in\rmX} \supp(\bfv).
\]
$\bfv$ is said to have \emph{full support} if $\supp(\bfv)= \rmA$. 
The standard basis vectors of $\RA$ are denoted by $\bfe^{\alpha}$ for $\alpha\in A$ and the all ones vector by $\1$. 
$\R[\bfx]$ is the ring of polynomials in a vector of $\rmn$ intermediates $\bfx=(\rmx_1, \dots, \rmx_\rmn)$ and $\R[\bfx]_\rmA$ the set of polynomials $f(\bfx)=\sum_{\alpha\in\rmA}\rmf_{\alpha}\bfx^{\alpha}$ . 
That is, by $\rmA$ we prescribe which monomials can occur in $f$. 
The vector $\bff=(\rmf_{\alpha})_{\alpha}$ is called the coefficient vector of $f$. 
The \emph{monomial support of a polynomial $f$} is $\supp(f)=\supp(\bff)$. 
A polynomial $p \in \R[\bfx]$ is called \emph{sum-of-squares} (sos), if $p=(p^1)^2 + \dots + (p^\rmk)^2$ for finitely many polynomials $p^1,\ldots,p^\rmk \in \R[\bfx]$. 
We use $\Sigma_{n,2d}$ to denote the cone of $\rmn$-variate sos of degree at most $2\rmd$. 
The \emph{$\rmA$-truncated moment vector map}  is
\[
  \m(\bfx)_\rmA:=(\bfx^{\alpha})_{\alpha\in\rmA}.
\]
The minimum and maximum of the monomial $\bfx^{\alpha}, \alpha\in\rmA$, over a compact set $\rmK\subseteq\Rn$ are 
\[
\xmin{\alpha}{\rmK}:=\min_{\bfx\in\rmK} \bfx^{\alpha}\quad\text{and}\quad\xmax{\alpha}{\rmK}:=\max_{\bfx\in\rmK} \bfx^{\alpha},
\]
respectively, $\xmin{A}{\rmK}:=(\xmin{\alpha}{\rmK})_{\alpha\in\rmA}$ and $\xmax{\rmA}{\rmK}:=(\xmax{\alpha}{\rmK})_{\alpha\in\rmA}.$ 
The \emph{degree of the set} $\rmA$ is
$
  \deg(\rmA):=\max\{\Vert\alpha\Vert_1 : \alpha\in\rmA\}.
$
For vectors we understand notions like $<,\le,>,\ge$ componentwise. Let $\bfl,\bfu\in\Rn$ with $\bfl < \bfu$, we define the box as 
$
  \Box(\bfl,\bfu):=[\rml_1,\rmu_1] \times \dots \times [\rml_\rmn,\rmu_\rmn]\subseteq\Rn.
$ 
We use $psd$ to abbreviate positive semidefinite.

\section{Pattern Relaxation}
\label{sec:rlx}
\subsection{Monomial Convexification and Monomial Relaxation}
Let $\rmA\subset\Nn$ be a finite and nonempty set and $\bfl,\bfu\in\Rn$ be given. 
We consider the problem of minimizing a polynomial $f\in\R[\bfx]_\rmA$ over the box $\rmK:=\Box(\bfl,\bfu)$, i.e.
\begin{align} \tag{POP}\label{POP}
  \begin{array}{clll}
  \minimize{0}  & \multicolumn{2}{l}{f(\bfx)} \\
  \for[0]{0}    & \bfx  & \in\Rn\\
  \st[0]{0}     & \bfx  & \in\rmK. 
  \end{array}  
\end{align}
Via lifting, we reformulate \eqref{POP} as an optimization problem in $\RA$ with a linear objective:
\begin{align*}
  \begin{array}{cl@{\,}l@{\,}l}
  \minimize{0}  & \langle   & \multicolumn{2}{@{}l}{\bff,\bfv \, \rangle}  \\
  \for[0]{0}    &           & \bfv & \in\RA\\
  \st[0]{0}     &           & \bfv & \in \set{\m(\bfx)_\rmA}{\bfx \in\rmK}. 
  \end{array}   
\end{align*}
Replacing the feasible set by its convex hull $ \mM(\rmK)_\rmA:=\conv(\set{\m(\bfx)_\rmA}{\bfx\in\rmK})$
yields the \emph{monomial convexification of} \eqref{POP}:
\begin{align}\tag{C-POP}\label{C-POP}
  \begin{array}{cl@{\,}l@{\,}l}
  \minimize{0}  & \langle   & \multicolumn{2}{@{}l}{\bff,\bfv \, \rangle}  \\
  \for[0]{0}    &           & \bfv & \in\RA\\
  \st[0]{0}     &           & \bfv & \in\mM(\rmK)_\rmA. 
  \end{array}   
\end{align}
We refer to $\mM(\rmK)_\rmA$ as a \emph{($\rmn$-variate) moment body}. 
Clearly, the convexification \eqref{C-POP} of \eqref{POP} is tight, that is, the optimal values of \eqref{C-POP} and \eqref{POP} coincide. 
For general sets $\rmA$, the constraint $\bfv\in\mM(\rmK)_\rmA$ is difficult to verify.
Thus, it is natural to relax $\bfv\in\mM(\rmK)_\rmA$ to a system of simpler constraints of the same type
\begin{align}
  \label{pattern:relaxation}
  \bfv_{\rmP}\in \mM(\rmK)_{\rmP} \text{ for } \rmP\in\mF,
\end{align}
where $\mF$ is a finite family of finite subsets of $\Nn$ that satisfies
\begin{align}\label{pattern:cond1}
  \rmA\subseteq \bigcup_{\rmP\in\mF}\rmP. 
\end{align}
Our intention is to cover $\rmA$ by sets $\rmP\in\mF$ such that the corresponding moment bodies $\mM(\rmK)_\rmP$ yield more structure that we can exploit algorithmically than the original moment body $\mM(\rmK)_\rmA$.
We call $\rmP\in\mF$ a \emph{pattern} and \eqref{pattern:relaxation} a \emph{pattern relaxation of $\mM(\rmK)_\rmA$} with respect to the \emph{pattern family} $\mF$. 
Throughout the paper we use $\rmA_{\mF}$ to denote $\bigcup_{\rmP\in\mF}\rmP$ and refer to $\alpha\in\rmA$ as \emph{original exponents} and to $\alpha\in\rmA_{\mF}\bs\rmA$ as \emph{auxiliary exponents}. 
Using a pattern relaxation of $\mM(\rmK)_\rmA$ we obtain a lower bound on \eqref{POP} by solving
\begin{align}\tag{P-RLX}\label{P-RLX}
  \begin{array}{cl@{\,}l@{\,}l}
  \minimize{0}  & \langle   & \multicolumn{2}{@{}l}{\bff,\bfv\,\rangle}  \\
  \for[0]{0}    &           & \bfv      & \in\R^{\rmA_{\mF}}\\
  \st[0]{0}     &           & \bfv_{\rmP}  & \in\mM(\rmK)_{\rmP} \,\fa[0]{0}\, \rmP\in\mF. 
  \end{array}   
\end{align}
An advantage of this approach is that we can choose patterns $\rmP\in\mF$ such that the computational costs of solving (possibly several instances of) \eqref{P-RLX} and the obtained lower bounds on the objective function value of \eqref{POP} are well balanced.

This procedure can also be seen as embedding $\mM(\rmK)_{\rmA}$ into $\mM(\rmK)_{\rmA_{\mF}}$ for some set $\rmA_{\mF}$ that contains $\rmA$ and can be represented nicely as a union of patterns $\rmP\in\mF$. 
Geometrically, the passage from \eqref{POP} through \eqref{C-POP} to \eqref{P-RLX} can be represented by the diagram
\begin{align*}
  \m(\rmK)_\rmA \xlongrightarrow{\text{convexifying}}\mM(\rmK)_{\rmA}\xlongrightarrow{\text{embedding}}\mM(\rmK)_{\rmA_{\mF}} \xlongrightarrow{\text{projecting}}\mM(\rmK)_{\rmP}.
\end{align*}

The quality of a pattern relaxation of $\mM(\rmK)_{\rmA}$ with respect to the family of patterns $\mathcal{P}$ depends on how the moment variables are connected by the system of conditions \eqref{pattern:relaxation}. 
We say that monomial variables $\rmv_{\alpha},\bfv_{\beta}$ are \emph{directly connected} by $\mathcal{P}$ if $\alpha,\beta\in\rmP\setminus\{\0\}$ holds for some $\rmP\in\mF$. 
Furthermore, $\rmv_{\alpha},\rmv_{\beta}$ are \emph{indirectly connected} by $\mF$ if there exist $\rmP_\rmj\in\mF,\rmj \in [\rmk]$, such that $\alpha\in \rmP_{1},\beta\in \rmP_{\rmk}$ and $\rmP_{\rmj}\cap\rmP_{\rmj+1}\setminus\{\0\}\not=\emptyset$ for all $ \rmj\in [\rmk-1]$.

\section{Known Convexification Techniques are Monomial Patterns}
\label{sec:exam-discus}
We formulate established convexification techniques from the literature as monomial patterns. These pattern types can be used -- alone or in combination -- to generate computationally tractable pattern relaxations \eqref{P-RLX} of \eqref{POP}. 
\subsection{Multilinear Pattern}
Let $\rmI\subseteq\{0,1\}^\rmn$, $\rmI \not= \emptyset$ and $\alpha\in\Nn$. 
We call 
\begin{align}\label{pat:ml}
  \ML(\alpha,\rmI) 	&:=	\{(\alpha_1\omega_1,\dots,\alpha_\rmn\omega_\rmn)\in\Nn : \omega\in\rmI\}
\end{align}
a \emph{multilinear pattern} (ML), see subplot $\rmA_{1}$ in \cref{fig:multilinear-pattern} for an illustration. 
It is well known that the convex envelope of multilinear functions over $\rmK=\Box(\rml, \rmu)$ is a polytope. 
In our context this implies the following.
\begin{proposition}\label{prop:mbody-ml}
  Let $\alpha\in\Nn$ be of full support. 
  The moment body $\mM(\rmK)_{\ML(\alpha,\rmI)}$ is a polytope satisfying 
  $$\mM(\rmK)_{\ML(\alpha,\rmI)}=\conv(\m(\rmV)_{\rmI})$$ 
  with $\rmV:=\{\xmin{\alpha_1\bfe^1}{\rmK}, \xmax{\alpha_1\bfe^1}{\rmK}\}\times \dots \times \{\xmin{\alpha_\rmn\bfe^\rmn}{\rmK}, \xmax{\alpha_\rmn\bfe^\rmn}{\rmK}\}$.
\end{proposition}

Multilinear patterns can be found in different contexts in the literature. 
In their basic version they are used to convexify multilinear polynomials. An essential building block for the convexification of product terms is the McCormick envelope \cite{MR469281}, that is the convexification  of bilinear products $\rmx_1\rmx_2$ by a tight description of the moment body $\mM(\rmK)_{\ML((1,1), \{0,1\}^2)}$, noting that $\ML((1,1), \{0,1\}^2) = \{0,1\}^2$.  
McCormick envelopes have been successfully used to build convex relaxations of multilinear monomials by applying them recursively. 
For a monomial $\bfx^{\alpha}$ with $\alpha\in\{0,1\}^\rmn$ and $\#\supp(\alpha)\ge 2$ this recursion can be described as follows. 
Let $\rmJ=\{\alpha\}$ and $\mF^{\alpha}_{\rec}=\emptyset$. 
For each element $\beta\in\rmJ$ write $\beta$ as $\beta=\beta'+\beta''$ with $\beta',\beta''\in\{0,1\}^\rmn\bs \{\0\}$.  Remove $\beta$ from $\rmJ$ and add $\beta'$ to $\rmJ$ if $\#\supp(\beta')\ge 2$ respectively $\beta''$ to $\rmJ$ if $\#\supp(\beta'')\ge 2$. Add the multilinear pattern $\{\beta, \beta', \beta'', \0\}$ to $\mF^{\alpha}_{\rec}$.
 This procedure corresponds to a binary tree with root $\alpha$ and the moment body of each pattern in $\mF^{\alpha}_{\rec}$ is tightly described by a McCormick envelope.  

In general it is not clear how to favorably decompose a multilinear exponent $\beta\in\rmJ$. For the smallest nontrivial case $\#\supp(\beta)=3$ this has been investigated in \cite{MR3722434}.

Another way to convexify $\bfx^{\alpha}$ with $\alpha\in\{0,1\}^n$ and $\#\supp(\alpha)\ge 2$ is to introduce for each factor $\rmx_\rmi^{\alpha_\rmi}$ with $\alpha_\rmi\not=0$ a moment variable $\rmv_{\alpha_\rmi}$ \cite{del2017polyhedral}. This corresponds to the pattern  
\begin{align}
  \rmP^{\alpha}:=\ML(\alpha,\{\alpha\}\cup\set{\bfe^\rmi}{\rmi\in \supp(\alpha)}).
\end{align}  
For $\rmA=\{\alpha\}$ the pattern relaxation corresponding to the pattern family $\{\rmP^{\alpha}\}$ is tight, while relaxation corresponding to $\mF^{\alpha}_{\rec}$ is usually not tight for $\rmA=\{\alpha\}$. 
It is however not clear which system 
\begin{align}\label{P-ML}
	\bfv_{\rmP^{\alpha}} \in\mM(\rmK)_{\rmP^{\alpha}}  	\text{ for all } , \alpha\in\rmA	
\end{align}
or
\begin{align}\label{R-MC}
	\bfv_{\rmP} \in \mM(\rmK)_{\rmP} 	\text{ for all } \rmP\in\mF^{\alpha}_{\rec},  \alpha\in\rmA
\end{align}
yields a tighter convex relaxation of $\mM(\rmK)_{\rmA}$ for $\rmA\subseteq\{0,1\}^\rmn$ with $\#\rmA\ge 2$.
 This is due to the different choice of auxiliary variables and how the original moment variables are connected by the different pattern families. 
 In our definition \eqref{pat:ml}, the parameter $\rmI$ allows to flexibly choose auxiliary variables and thereby control the connective properties of the multilinear pattern family. 

Multilinear patterns have also been applied to general polynomials $f\in\R[\bfx]_\rmA$ with $\rmA\subseteq\Nn$ and $\rmA\bs\{\0\}\not=\emptyset$ \cite{bao2015global,glover1974converting}. 
Using the set $\Gamma:=\set{\gamma=\alpha_\rmi\bfe^\rmi}{\alpha\in\rmA, \rmi\in [\rmn]}\bs\{\0\}$, the substitution $\rmy_{\alpha_\rmi\bfe^\rmi}=\rmx^{\alpha_\rmi\bfe^\rmi}$ and $\tilde{\rmA}:=\{\beta\in\{0,1\}^{\Gamma}:\exists\alpha\in\rmA \text{ s.t. } \sum_{\gamma\in\Gamma}\beta_{\gamma}\gamma = \alpha\}$ a multilinear intermediate $\tilde{f}\in\R[\bfy]_{\tilde{\rmA}}$ of $f$ is generated. 
This corresponds to relaxing the usually non-polyhedral $\mM(\rmK)_\rmA$ with the polytope $\mM(\Box(\xmin{\Gamma}{\rmK},\xmax{\Gamma}{\rmK}))_{\tilde{\rmA}}$ and  
\begin{align}\label{G-ML}
  \begin{array}{cl@{\,}l@{\,}l}
  \minimize{0}  & \langle   & \multicolumn{2}{@{}l}{\tbff,\bfv \, \rangle}  \\
  \for[0]{0}    &           & \bfv  & \in\R^{\tilde{\rmA}}\\
  \st[0]{0}     &           & \bfv  & \in \mM(\Box(\xmin{\Gamma}{\rmK},\xmax{\Gamma}{\rmK}))_{\tilde{\rmA}}.
  \end{array}   
\end{align}  
\eqref{G-ML} is further relaxed using \eqref{R-MC} or \eqref{P-ML}. The entire process can be expressed using multilinear patterns as well. 
For example using \eqref{P-ML} to further relax \eqref{G-ML} yields the family $\set{\ML(\alpha,\{\1\}\cup\set{\bfe^\rmi}{\rmi\in [\rmn]})}{\alpha\in\rmA}$.

\begin{example}\label{rmk:pattern-plot}
We consider different exponent sets for $\rmn=2$ in the following,
\begin{align*}
\rmA_1  &:= \{(0,0), (0,3), (3,0), (3,3)\}, \\
\rmA_2  &:= \{(0,0), (1,1), (2,2), (3,3), (4,4), (5,5), (6,6)\}, \\
\rmA_3  &:= \{(0,4),(2,5),(2,8),(6,2)\}, \\
\rmA_4  &:= \{(0,2),(3,5),(6,8)\}, \\
\rmA_5  &:= \{(0,0), (0,3), (0,6), (2,0), (2,3), (4,0)\}, \\
\rmA_6  &:= \{(4,0), (4,1), (4,2), (4,3), (4,4), (4,5)\}, \\
\Aex &:= \{(0,2), (1,1), (2,3), (2,4), (4,0), (5,5)\} \label{ex_f}.
\end{align*}
The exponent sets and different patterns are visualized in Figures~\ref{fig:multilinear-pattern}, \ref{fig:dsos-circuit-pattern}, and \ref{fig:chains} as follows. 
The title of a subplot refers to the set of original exponents which are depicted by red squares. 
The auxiliary exponents are depicted by blue dots. A pattern $\rmP$ corresponds to an undirected smooth curve and all the colored points and squares that the curve passes through.
$\Aex$ will also be used in the numerical result section.
\end{example}

\begin{figure}[!h]
	\centering
	\includegraphics{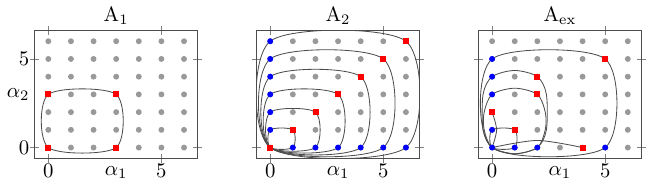}
	\caption{Visualization of multilinear patterns for example sets $\rmA_1$, $\rmA_2$, and $\Aex$ as described in \cref{rmk:pattern-plot}. 
	\textbf{Left:}  the multilinear pattern $\ML((3,3),\{0,1\}^2)=\rmA_1$;
	\textbf{Middle:}  $\mF=\{\ML(\alpha,\{0,1\}^2):\alpha\in\rmA_2\backslash\{\0\}\}$;
	\textbf{Right:}  $\mF=\{\ML(\alpha,\{0,1\}^2):\alpha\in\Aex\}$. 
	}\label{fig:multilinear-pattern}
\end{figure}

\subsection{Expression Trees}

Convexification using expression trees is common in general nonlinear optimization \cite{sherali2013reformulation,smith2001symbolic}. This approach is based on the observation that each algebraic expression is made up of a certain set of elementary operations, such as powers, linear combinations, or products of expressions.
A decomposition of an algebraic expression into these operations can be visualized using an algebraic expression tree, like in \cref{fig:symbolic-reformulation}. This is a rooted tree with nodes labeled by terms occurring in the expression. Each term is built up from its child terms using elementary operations and the underlying convexification is obtained by introducing a variable for each node and providing convex constraints that link every node and its child nodes. 
For polynomials, given as a linear combination of monomials, all the nodes apart from the root node correspond to monomial variables. A non-root node and its child nodes therefore build a pattern. 
\begin{figure}[!h]
  \centering
  \includegraphics{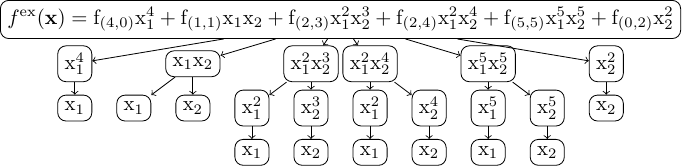}
  \caption{A possible algebraic expression tree for the polynomial $\fex$ with $\supp(\fex) \subseteq \Aex$ and the set $\Aex$ from \cref{ex_f}. }\label{fig:symbolic-reformulation}
\end{figure}
For example, the term $\rmx_1^2\rmx_2^3$ in  \cref{fig:symbolic-reformulation} is decomposed into the product of the powers $\rmx_1^2$ and $\rmx_2^3$ of the variables $\rmx_1$ and $\rmx_2$. For these three terms, one introduces the monomial variables $\rmv_{(2,3)}$, $\rmv_{(2,0)}$ and $\rmv_{(0,3)}$, respectively. The relationship of these variables is captured by the pattern $\rmP= \{ (2,3), (2,0), (0,3)\} $
and the corresponding moment body $\mM(\rmK)_{\rmP}$ is described by the well-known McCormick inequalities. The variable $\rmv_{(0,3)}$ is further  connected to  $\rmv_{(0,1)}$ by exponentiation. The corresponding pattern is $\{(0,1),(0,3)\}$. All patterns induced by the tree in \cref{fig:symbolic-reformulation} are visualized in the first subplot of \cref{fig:tree-bf-moment}.
Observe that there other ways to form expression trees. For example one could also decompose $\rmx_1^2\rmx_2^3$ into $\rmx_1\rmx_2^1$ and $\rmx_1\rmx_2^2$.
However, the corresponding pattern $\{(2,3),(1,1),(1,2)\}$ is no longer tightly described by McCormick inequalities.

Since expression trees normally correspond to patterns of small size, they lead to weak, but efficiently computable relaxations, which are often used in divide-and-conquer approaches like branch-and-bound. 

\begin{figure}[!h]
	\centering
	\includegraphics{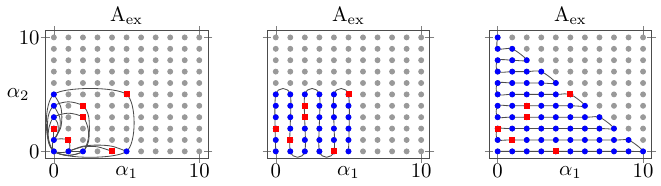}
	\caption{Visualization of the patterns corresponding to the expression tree approach, the bound-factor product and the moment relaxation   for example set $\Aex$ as described in \cref{rmk:pattern-plot}.
	\textbf{Left:}  pattern family induced by the expression tree from \cref{fig:symbolic-reformulation}, i.e., $\mF=\{\ML(\alpha,\{0,1\}^2):\alpha\in\{(1,1),(2,3),(2,4),(5,5)\} \}\cup \{ \{(0,1),(0,\rmi)\}:\rmi\in\{2,3,4,5\}\}\cup\{\{(1,0),(\rmi,0)\}:\rmi\in\{2,4,5\}\}$; 
	\textbf{Middle:} pattern corresponding to the bound-factor product $\BF((5,5))$ applied to $\Aex$; 
	\textbf{Right:} pattern of the lowest hierarchy level of the  moment relaxation $\N^2_{10}$ applied to $\Aex$.}\label{fig:tree-bf-moment}
\end{figure}

\subsection{Bound-Factor Products}

Another convexification approach is based on so-called \emph{bound-factor products (BF)} \cite{dalkiran2013boundfactor}. 
Since the polynomials $\rmx_\rmi - \rml_\rmi$ and $\rmu_\rmi-\rmx_\rmi$ are nonnegative on $\rmK$, the products of these polynomials (with repetitions allowed) are also nonnegative on $\rmK$. 
So, one can consider the products 
\begin{align}\label{ineq:boundfactor}
  F^{\alpha,\beta}(\bfx):=\prod\limits_{\rmi\in[\rmn]} (\rmx_\rmi - \rml_\rmi)^{\alpha_\rmi-\beta_\rmi}(\rmu_\rmi - \rmx_\rmi)^{\beta_\rmi}
\end{align}
of $|\alpha|$ polynomials with $\alpha_\rmi$ linear factors depending on the variable $\rmx_\rmi$, where $\alpha, \beta \in \Nn$ and $\alpha \ge \beta$.
For a generic choice of $\bfl$ and $\bfu$, the polynomial $F^{\alpha,\beta}(\bfx)$ includes all monomials with exponents in the pattern $\BF(\alpha):=\{0,\dots,\alpha_1\}\times\dots\times\{0,\dots,\alpha_\rmn\}$. 
By substituting $\rmv_{\gamma}=\bfx^{\gamma}$ for all $\gamma\in\BF(\alpha)$ we obtain a linearization $LF^{\alpha,\beta}(\bfv)$ of $F^{\alpha,\beta}(\bfx)$. The system of linear inequalities
\begin{align}\label{ineq:boundfactor-system}
  LF^{\alpha,\beta}(\bfv) \ge 0 \fa[3]{3} \beta\in \BF(\alpha)
\end{align}
is valid for $\bfv\in\mM(\rmK)_{\BF(\alpha)}$. 
This approach can also be viewed as hierarchical since one can increase the order of the bound-factor products in order to tighten the relaxation. 
Note that in polynomial optimization this approach is known as the dual of Handelman's hierarchy \cite{handelman1988representing}.  
Within this approach one groups monomial variables into patterns of a rather large size and connects them with only linear constraints.  
For example, to generate a non-trivial  relaxation of \eqref{POP} using  bound-factor products for the set $\Aex$ from \cref{ex_f} one is forced to use at least one pattern $\BF(\alpha)$ with $\alpha_1 \ge 5$ and $\alpha_2 \ge 5$, which means that at least $36$ monomial variables have to be introduced, compare \cref{fig:tree-bf-moment}. 
Another issue is that the system of linear inequalities \eqref{ineq:boundfactor-system} is not a tight description of $\mM(\rmK)_{\BF(\alpha)}$.
These kinds of relaxations have also been used within branch-and-bound strategies \cite{dalkiran2013boundfactor}.
\subsection{Moment Relaxation}
The most popular convexification techniques in the polynomial optimization community are the \emph{moment relaxation} and its dual counterpart, the \emph{sos relaxation} 
\cite{lasserre2011,laurent2009sums,marshall2008}. 
This approach introduces a large number of monomial variables and links them all with one large pattern using psd constraints. 
The approach is hierarchical in the sense that one first needs to choose a bound on the degree of the monomials, for which monomial variables are introduced. 
These hierarchies have in practice good approximation properties at the expense of large sdps, see \cite{MR1995016} for computational studies. 
Even though the lowest possible hierarchy level of the moment relaxation often produces tight bounds, it does not scale well when the number of variables and/or degree grows. 
However, strategies exist to make the approach more tractable, e.g.,  exploiting correlative sparsity \cite{MR2219151,MR3638571,MR3865829,MR3783098,mai2020sparse}, term sparsity and structures of the Newton polytope \cite{MR480338,wang2021tssos}, combinations of the previous \cite{MR4198579,wang2020cs}, symmetry structures \cite{MR3029481,lasserre2011} as well as spectral methods that exploit the so-called constant trace property of sos hierarchies \cite{mai2020hierarchy}.

To derive a so-called moment relaxation of \eqref{POP}, the following representation of the moment body $\mM(\rmK)_\rmA$ in terms of probability measures is used: 
\begin{align*}
  \mM(\rmK)_{\rmA}  &= \llb \int\m(\bfx)_\rmA \mu(d\bfx): \mu\text{ is a probability measure on } \rmK \rrb.
\end{align*}
So a vector $\bfv\in\RA$ belongs to $\mM(\rmK)_{\rmA}$ iff there exists a probability measure $\mu$ on $\rmK$ such that $\rmv_{\alpha}=\int\bfx^{\alpha}\mu(d\bfx)$ for all $\alpha\in\rmA$. Hence, \eqref{C-POP} can be formulated as 
\begin{align}\nonumber
  \begin{array}{cl@{\,}l@{\,}l}
  \minimize{0}  & \langle   & \multicolumn{2}{@{}l}{\bff,\bfv\,\rangle}  \\
  \for[0]{0}    &           & \bfv  & \in\R^{\Nn}\\
  \st[0]{0}     &           & \bfv  & \text{is a moment sequence of a probability measure on $\rmK$.}
  \end{array}   
\end{align}  
In pursuit of a tractable characterization of the feasible set, we use the following definition and theorem.
\begin{definition}[Moment Matrix and Localizing Matrix {\cite[Ch.2.7.1]{lasserre2015}}]
  The localizing matrix $\mathbf{M}_\rmk(g,\bfv)$ for a polynomial $g$ with coefficients $(\rmg_\alpha)_{\alpha}$ and the moment matrix $\mathbf{M}_\rmk(\bfv)$ are defined as 
  \begin{align*}
    \mathbf{M}_\rmk(g,\bfv) := \lb\sum\limits_{\gamma\in\Nn}\rmg_{\gamma} \rmv_{\gamma + \alpha + \beta}\rb_{\alpha,\beta\in\N^\rmn_\rmk} \quad\text{ and }\quad
    \mathbf{M}_\rmk(\bfv)                    :=\mathbf{M}_\rmk(1,\bfv).
  \end{align*}
\end{definition}
\begin{theorem}[{\cite[Th. 2.44]{lasserre2015}}]\label{theo:moment-mat-representaion}
  Let $g^1,\dots,g^\rmm$ be $\rmn$-variate polynomials such that there exist sos polynomials $s^0,\dots,s^\rmm$ for which 
  $$\{\bfx\in\Rn: s^0(\bfx )+ \sum\limits_{\rmi\in [\rmm]}s^\rmi(\bfx )g^\rmi(\bfx )\ge 0 \}$$ 
  is compact. 
  Furthermore, let $\rmK=\{\bfx\in\Rn : g^\rmi(\bfx)\ge 0 , \rmi \in [\rmm]\}.$ 
  A sequence $(\rmv_{\alpha})_{\alpha}$ has a finite Borel representing measure with support in $\rmK$ iff
  \begin{align*}
    \mathbf{M}_\rmk(\bfv)      & \text{ is psd and} \\
    \mathbf{M}_\rmk(g^\rmi,\bfv)  & \text{ is psd for all } \rmi \in [\rmm]\text{, for all } \rmk.
  \end{align*}
\end{theorem}
We describe the box $\rmK$ by the polynomials $g^\rmi(\bfx):= (\rmx_\rmi-\rml_\rmi)(\rmu_\rmi-\rmx_\rmi)$ for $\rmi\in [\rmn]$, i.e. $\rmK=\{ \bfx\in\Rn : g^\rmi(\bfx)\ge 0, \rmi \in [\rmn]\}$. 
Clearly, the assumptions of \cref{theo:moment-mat-representaion} hold and we can formulate \eqref{C-POP} as
\begin{align}\nonumber
  \begin{array}{cl@{\,}l@{\,}lll}
  \minimize{0}  & \langle   & \multicolumn{3}{@{}l}{\bff,\bfv \, \rangle}  \\
  \for[0]{0}    &           & \bfv &\in\R^{\Nn}\\
  \st[0]{0}     &           & \multicolumn{2}{@{}l}{\mathbf{M}_\rmk(\bfv)}          & \text{is psd for all } \rmk, \\
                &           & \multicolumn{2}{@{}l}{\mathbf{M}_{\rmk-2}(g^\rmi,\bfv)}  & \text{is psd for all } \rmi \in [\rmn] \text{ and all } \rmk, \\
                &           & \rmv_{\0}  &= 1.
  \end{array}   
\end{align}  

The moment and localizing matrices from the above constraints are submatrices  of infinite matrices with rows and columns indexed by $\alpha,\beta  \in \Nn$ rather than $\alpha,\beta \in \N_{\rmk}^{\rmn}$. Thus, since $\rmk$ is arbitrarily large, the constraints can be viewed as infinite-dimensional psd constraints that impose semidefiniteness of the infinite moment matrix and $\rmm$ infinite localizing matrices. By fixing a particular $\rmk = \rmd$ one relaxes the infinite dimensional psd problem to a finite-dimensional one. This is known as the choice of the level of the hierarchy of the moment relaxations. It is natural to restrict attention to levels that are sufficient large to ensure that all the variables occurring in the objective function appear in the constraints. 
Thus, for every $\rmd\ge \lceil \frac{\deg(\rmA)}{2}\rceil$, we consider the optimal value $\rho_\rmd $ of the semidefinite problem
\begin{align}\nonumber
  \begin{array}{cl@{\,}l@{\,}lll}
  \minimize{0}  & \langle   & \multicolumn{3}{@{}l}{\bff,\bfv \, \rangle}  \\
  \for[0]{0}    &           & \bfv &\in\R^{\N^\rmn_{2\rmd}}\\
  \st[0]{0}     &           & \multicolumn{2}{@{}l}{\mathbf{M}_\rmd(\bfv)}          & \text{is psd}, \\
                &           & \multicolumn{2}{@{}l}{\mathbf{M}_{\rmd-2}(g^\rmi,\bfv)}  & \text{is psd for all } \rmi \in [\rmn],  \\
                &           & \rmv_{\0}  &= 1.
  \end{array}   
\end{align}  
The value $\rho_\rmd$ is a lower bound on the optimal value of \eqref{POP}. This problem has one sdp constraint of size $\binom{\rmn+\rmd}{\rmd}$ that involves the monomial variables $\rmv_\alpha, \alpha\in\N^\rmn_{2\rmd}=\set{\alpha\in\Nn}{\alpha_1+\dots+\alpha_\rmn\le 2\rmd}$, and $\rmn$ sdp constraints of size $\binom{\rmn+\rmd-1}{\rmd-1}$ that involve $\rmv_\alpha, \alpha \in \N^\rmn_{2\rmd-2}$. 
Hence, the moment relaxation corresponds to the pattern $\N^\rmn_{2\rmd}$. 
Note that for general problems it is not possible to reduce the size of the mentioned sdp constraints \cite{averkov2018optimal}. 
For a small example like \cref{ex_f} with $\deg(\Aex)=10$ and $\rmn=2$ this adds up to 66 moment variables. 
The third subplot in \cref{fig:tree-bf-moment} shows the pattern corresponding to the lowest hierarchy level which involves an sdp constraint with a $21\times 21$ matrix. 

\subsection{Singletons}
\label{singleton:patterns}
The smallest patterns are \emph{singletons} $\{\alpha\}$ with $\alpha\in\Nn$. 
The moment body of a singleton $\{\alpha\}$ is the interval $\mM(\rmK)_{\{\alpha\}}=[\xmin{\alpha}{\rmK},\xmax{\alpha}{\rmK}]$. 
The pattern relaxation of $\mM(\rmK)_\rmA$ induced by the family of singletons $\set{\{\alpha\}}{\alpha\in\rmA}$ is $\Box(\xmin{\rmA}{\rmK},\xmax{\rmA}{\rmK})$. 
This is the weakest possible relaxation within the pattern approach. The provided bounds on the monomial variables can be exploited by branch-and-bound solvers \cite{sahinidis:baron:17.8.9}. 

\subsection{Alternative Techniques}
\begin{figure}[h!!!]
	\centering 
	\includegraphics{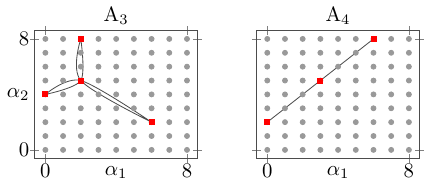}
	\caption{Visualization of the underlying patterns of sonc and sdsos for example sets $\rmA_2$ and $\rmA_4$ as described in \cref{rmk:pattern-plot}. 
	\textbf{Left:} circuit pattern (sonc) for $\rmA_3 = \{(0,4),(2,5),(2,8),(6,2)\}$; 
	\textbf{Right:} sdsos pattern for $\rmA_4 = \{(0,2),(3,5),(6,8)\}$.}\label{fig:dsos-circuit-pattern}
\end{figure}
Besides the mentioned techniques there exist other approaches for polynomial instances. For example approaches based on geometric programming or relative entropy relaxations for signomial programming have been investigated in \cite{duffin1967geometric,ghasemi2013lower,chandrasekaran2017relative,chandrasekaran2016relative}. Closely related to geometric and signomial programming are special non-negativity certificates utilizing so-called sums of nonnegative circuit polynomials (sonc)  \cite{dressler2017,seidler2018experimental,MR4007472}. 
Note that the cones of nonnegative circuit polynomials are essentially power cones. 
As it is well known that these cones have second oder cone lifts, so does the sonc cone. 
For a different proof see \cite{wang2020second}.
Furthermore, in \cite{dressler2020global} the authors purpose a linear approximation of the sonc cone.

Another approach uses scaled diagonally dominant sums of squares (sdsos) \cite{ahmadi2019dsos}, that is a non-negativity certificate based on sos polynomials with sparse monomial support $\{2\alpha, \alpha+\beta, 2\beta\}$ with $\alpha,\beta\in\Nn$.

By dualizing the sonc \cite{katthanunified} and sdsos relaxations, one arrives at convexifications in terms of monomial variables. 
These duals correspond to pattern relaxations that use special pattern types, see an illustration in \cref{fig:dsos-circuit-pattern} for the case $\rmn=2$. 

\section{Truncated Submonoids}
\label{sec:ts-pat} 
In order to generate computationally tractable relaxations of \eqref{POP} we look for patterns $\rmP$ such that we can formulate the constraint $\bfv_\rmP\in\mM(\rmK)_\rmP$ of \eqref{P-RLX} (or a sufficiently tight approximation of this constraint) in such a way that it is accessible to optimization methods. 
In this section we introduce the new pattern type \emph{truncated submonoids} for which we determine the size of these constraints.

Let $\rmk\in[\rmn]$, $\rmd\in\N$  and $\Gamma=(\gamma^1, \dots, \gamma^{\rmk})\in\N^{\rmn\times\rmk}$ be a matrix, whose columns $\gamma^\rmi$ are nonzero vectors with pairwise disjoint supports. 
Clearly, such vectors $\gamma^1, \dots, \gamma^{\rmk}$  are linearly independent. 
We call 
\begin{align*}
 	\TS(\Gamma,\Nkd):=\{\gamma^1\omega_1 + \dots + \gamma^\rmk\omega_\rmk:\omega\in\Nkd\},	
\end{align*}
the \emph{$\rmk$-variate $\Nkd$-truncated submonoid} (TS) and $\gamma^1,  \dots ,\gamma^\rmk$ its \emph{generators}.
\begin{proposition}\label{03-prop:mbody-ts}
The moment body $\mM(\rmK)_{\TS(\Gamma,\Nkd)}$ can be represented as a $\rmk$-variate moment body by 
\[
  \mM(\rmK)_{\TS(\Gamma,\Nkd)} = \mM(\Box(\xmin{\Gamma}{\rmK},\xmax{\Gamma}{\rmK}))_{\Nkd}. 
\]
\end{proposition}
\begin{proof}
The desired representation is obtained by taking the convex hull of the left and the right hand side of the equality $\m(\rmK)_{\TS(\Gamma,\Nkd)} = \m(\Box(\xmin{\Gamma}{\rmK},\xmax{\Gamma}{\rmK}))_{\Nkd}$.
\end{proof}
The next proposition follows by combining \cref{theo:moment-mat-representaion} and \cref{03-prop:mbody-ts}.
\begin{proposition}\label{prop:mbody-ts-moment}
Let  $g^\rmi(\tbfx):=(\xmax{\gamma_\rmi}{\rmK} - \trmx_\rmi)(\trmx_\rmi-\xmin{\gamma_\rmi}{\rmK}) $ for each $\rmi\in [\rmk].$
Then $\bfv\in\mM(\rmK)_{\TS(\Gamma,\Nkdd)}$ if and only if there exists $\bfw\in\R^{\Nk}$ with $\rmv_{\Gamma\omega}=\rmw_{\omega}$ for all $\omega\in\Nkdd$ and
  \begin{align}\label{eq:ts-mom}
  \begin{aligned}
    \mathbf{M}_\rmr(\bfw)         & \text{ is psd and} \\
    \mathbf{M}_\rmr(g^\rmi,\bfw)  & \text{ is psd for all } \rmi \in [\rmk]\text{, for all } \rmr.
  \end{aligned}
  \end{align}
\end{proposition}

Using \cref{prop:mbody-ts-moment} we can treat the constraint $\bfv\in\mM(\rmK)_{\TS(\Gamma,\Nkd)}$ as in the moment relaxation, i.e., truncating the infinite dimensional matrices at an even $\rmr\in\N$ with $\rmr\ge\rmd$.
Naturally, the complexity of the constraints \eqref{eq:ts-mom} depends $\rmk$ and $\rmd$. 
For practical purposes, it is desirable to choose these parameters not to large. 
We use $\TS(\Gamma,\Nkdd)$, truncating the matrices in \eqref{eq:ts-mom} at $\rmr=2\rmd$, because of several reasons. 
\begin{itemize}
  \item
  Since our overall strategy for \eqref{POP}, based on \eqref{P-RLX}, does not guarantee determination of the exact optimal value of \eqref{POP}, we see no need in exact approximation of the constraints $\bfv_\rmP \in \mM(\rmK)_\rmP$ in \eqref{P-RLX} at high computational costs. 
  Therefore, when $\rmP$ is a truncated submonoid pattern, we prefer to relax $\mM(\rmK)_\rmP$ by means of \cref{prop:mbody-ts-moment} using a value $\rmr$ that is not too large.
  \item
  The lowest possible level of the moment relaxation often yields sufficiently tight bounds. 
\end{itemize}

\begin{figure}[h!!!]
  \centering
  \includegraphics{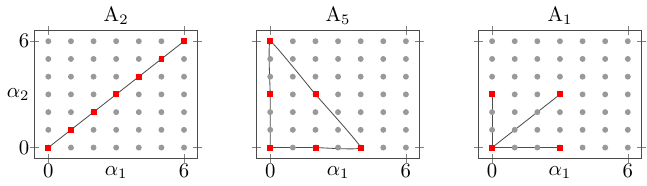}
  \caption{Visualization of different truncated submonoids for example sets $\rmA_2$, $\rmA_5$ and $\rmA_1$ as described in \cref{rmk:pattern-plot}. 
    \textbf{Left:} univariate truncated submonoid  $\TS(\1,\N_6)=\rmA_2$; 
    \textbf{Middle:} bivariate truncated submonoid $\TS((2,0),(0,3)),\N^2_2)$;
    \textbf{Right:} pattern family $\mF=\{\TS((0,3),\N_1),\TS((3,0),\N_1),\TS((3,3),\N_1)\}$ for $\rmA_1$.}\label{fig:chains}
\end{figure}	
We would like to stress that in practice the size of moment relaxations for the original problem \eqref{POP} does not scale well if the degree of $\rmA$ and $\rmn$ grows.
In general it is not possible to reduce this size if $\rmA$ does not admit any specific sparsity structures; see \cite{averkov2018optimal} for a theoretical justification. 
In contrast, we believe that one can use moment relaxations for the constraint $\bfv\in\mM(\rmK)_{\TS(\Gamma,\Nkd)}$, since we can keep the size of the matrices in \eqref{eq:ts-mom} under control. 


\subsection{Chains}\label{03-subsec:chains}
For $\gamma \in \Nn \setminus \{\0\}$ and $\rmd\in\N$, we call
\[	
	\CH(\gamma,\rmd) := \set{\rmi \gamma}{\rmi\in [\rmd]_0}
\] 
a \emph{chain}. 
A chain is a special truncated submonoid pattern with $\rmk=1$. 
In the case of chains $\rmP$, the constraints $\bfv_\rmP\in\mM(\rmK)_\rmP$ of   \eqref{P-RLX} amounts to semidefinite constraints.
\begin{theorem}[{\cite[Th. 3.23]{laurent2009sums}}]\label{03-theo:fekete}
	Let $\rmd$ be an nonnegative integer, $\rma,\rmb\in\R$ with $\rma<\rmb$ and $g(\tbfx):=(\rma - \trmx)(\trmx-\rmb).$
	Then 
	\[
	 \mM([\rma,\rmb])_{[2\rmd]_0} = \{\bfv\in\R^{\Ndd}:\bfM_{2\rmd}(\bfv) \text{ is psd and } \bfM_{2\rmd-2}(g,\bfv) \text{ is psd.}\}
	\] 
\end{theorem}
Combining \cref{03-prop:mbody-ts} and \cref{03-theo:fekete} we obtain:
\begin{proposition}\label{prop:mbody-chain-sdp}
Let $\CH(\gamma,2\rmd)$ be a chain pattern and $g(\tbfx):=(\xmax{\gamma}{K} - \trmx)(\trmx-\xmin{\gamma}{K}).$
Then the moment body $\mM(\rmK)_{\CH(\gamma,2\rmd)}$ can be represented using semidefinite constraints 
\[
  \mM(\rmK)_{\CH(\gamma,2\rmd)} = \{\bfv\in\R^{\Ndd}:\bfM_{2\rmd}(\bfv) \text{ is psd and } \bfM_{2\rmd-2}(g,\bfv) \text{ is psd}\}.
\]
\end{proposition}
\subsection{Shifting a Pattern}
To generate new patterns by shifting existing ones by a vector $\eta$, we can use the following proposition.
\begin{proposition}\label{03-prop:mbody-shifted}
  Let $\rmP\subseteq\Nn$ be a pattern with $\supp(\rmP)\not=[\rmn]$ and $\eta\in\Nn$ a vector with $\supp(\eta)\subseteq[\rmn]\backslash\supp(\rmP)$. Then
  \begin{align*}
    \mM(\rmK)_{\eta+\rmP} = \conv(\xmin{\eta}{\rmK}\mM(\rmK)_{\rmP}\cup\xmax{\eta}{\rmK}\mM(\rmK)_{\rmP}).
  \end{align*} 
\end{proposition}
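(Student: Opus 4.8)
The plan is to prove the identity $M^{\eta+P}(K) = \conv(\mbx^{\eta}_{\min} M^{P}(K) \cup \mbx^{\eta}_{\max} M^{P}(K))$ by exploiting the crucial hypothesis that $\supp(\eta)$ and $\supp(P)$ are disjoint. Because of this disjointness, for every $\mbx \in K$ and every $\alpha \in P$ the exponent $\eta + \alpha$ splits the variables cleanly: the monomial $\mbx^{\eta+\alpha} = \mbx^{\eta} \cdot \mbx^{\alpha}$ factors into a part depending only on the coordinates in $\supp(\eta)$ and a part depending only on the coordinates in $\supp(P)$. First I would record this factorization at the level of the moment vector map, writing $m^{\eta+P}(\mbx) = \mbx^{\eta} \cdot m^{P}(\mbx)$, where the scalar $\mbx^{\eta}$ ranges over the interval $[\mbx^{\eta}_{\min}, \mbx^{\eta}_{\max}]$ as $\mbx$ ranges over $K$, and $m^{P}(\mbx)$ ranges over $m^P(K)$.

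\medskip
\noindent\textbf{Reducing to a scalar-times-set statement.}

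The next step is to observe that, thanks to the disjoint supports, the value of $\mbx^{\eta}$ and the value of $m^{P}(\mbx)$ can be controlled independently over the box $K$: the coordinates governing $\mbx^{\eta}$ lie in $\supp(\eta)$, while those governing $m^{P}(\mbx)$ lie in $\supp(P)$, and these are disjoint index sets, so $K$ factors as a product and the two quantities vary over separate blocks of coordinates. I would use this to argue that $m^{\eta+P}(K)$ equals the set $\{\,t \cdot \mbw : t \in [\mbx^{\eta}_{\min}, \mbx^{\eta}_{\max}],\ \mbw \in m^P(K)\,\}$. Taking convex hulls of both sides then reduces the whole proposition to the following elementary convexity fact: for a compact set $S$ (here $m^P(K)$, whose convex hull is $M^P(K)$) and a scalar interval $[t_{\min}, t_{\max}]$, one has $\conv(\{t\mbw : t \in [t_{\min},t_{\max}], \mbw \in S\}) = \conv(t_{\min} \conv(S) \cup t_{\max} \conv(S))$.

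\medskip
\noindent\textbf{The core convexity computation.}

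To finish, I would verify that last identity. Every point $t\mbw$ with $t \in [t_{\min}, t_{\max}]$ is a convex combination $t = \lambda t_{\min} + (1-\lambda) t_{\max}$, so $t\mbw = \lambda (t_{\min}\mbw) + (1-\lambda)(t_{\max}\mbw)$ lies in $\conv(t_{\min} S \cup t_{\max} S)$; this gives the inclusion ``$\subseteq$'' after taking convex hulls, using that $\conv(t_{\min} S \cup t_{\max} S) \subseteq \conv(t_{\min} \conv(S) \cup t_{\max} \conv(S))$. For the reverse inclusion, the extreme points $t_{\min}\mbw$ and $t_{\max}\mbw$ (for $\mbw \in S$) are visibly of the form $t\mbw$ with $t \in [t_{\min},t_{\max}]$, and scaling commutes with convex hulls so that $t_{\min}\conv(S) = \conv(t_{\min} S)$ and likewise for $t_{\max}$; hence both generating sets on the right lie in the left-hand convex hull.

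\medskip
\noindent\textbf{Main obstacle.}

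The step I expect to require the most care is the independence claim, namely that $\mbx^{\eta}$ and $m^{P}(\mbx)$ can be varied independently as $\mbx$ ranges over $K$, so that $m^{\eta+P}(K)$ really is the full product-scaled set rather than some proper subset of it. This is exactly where the hypothesis $\supp(\eta)\subseteq[n]\setminus\supp(P)$ is indispensable: it guarantees that the box $K$ decomposes into a factor controlling $\mbx^{\eta}$ and a factor controlling $m^P(\mbx)$ with no shared coordinates, so one can choose the two groups of coordinates freely. Once this decoupling is made precise, the remaining algebra is the routine convexity manipulation sketched above.
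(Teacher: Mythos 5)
Your proposal is correct and follows essentially the same route as the paper's proof: factor $m^{\eta+P}(\mbx)=\mbx^{\eta}\,m^{P}(\mbx)$, use the disjointness of $\supp(\eta)$ and $\supp(P)$ together with the product structure of the box $K$ to decouple the scalar $\mbx^{\eta}$ from the vector $m^{P}(\mbx)$, and then reduce the interval of scalings to its endpoints by the convex-combination identity $t\mbw=\lambda(t_{\min}\mbw)+(1-\lambda)(t_{\max}\mbw)$. The only cosmetic difference is that you describe the decoupled set as the full product $\setcond{t\mbw}{t\in[\mbx^{\eta}_{\min},\mbx^{\eta}_{\max}],\,\mbw\in m^P(K)}$ (which implicitly invokes the intermediate value theorem), whereas the paper writes it as $\bigcup_{\mby\in K}\mby^{\eta}\,m^P(K)$ and so never needs that every scalar value in the interval is attained; both versions lead to the same conclusion.
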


\begin{proof} 
  The assertion follows from $\mM(\rmK)_{\eta+\rmP} = \conv(\{\bfx^{\eta}  (\bfx^{\beta})_{{\beta}\in\rmP}: \bfx\in\rmK\})$ and the observation that $\bfx^{\eta}$ and $\bfx^{\beta}$ have no common factor since $\supp(\eta)\cap\supp(\beta)=\emptyset$. 
  Hence
  \begin{align*}   
  \conv(\{\bfx^{\eta}  (\bfx^{\beta})_{{\beta}\in\rmP}: \bfx\in\rmK\})          
                  &=  \conv(\bigcup\limits_{\bfy\in\rmK}\bfy^{\eta} \{(\bfx^{\beta})_{{\beta}\in\rmP}: \bfx\in\rmK\}) \\
                  &=  \conv(\bigcup\limits_{\bfy\in\rmK}\bfy^{\eta}\mM(\rmK)_{\rmP})\\
                  &=  \conv(\xmin{\eta}{\rmK}\mM(\rmK)_{\rmP}\cup\xmax{\eta}{\rmK}\mM(\rmK)_{\rmP}). 
  \end{align*}
\end{proof}
\subsection{Shifted Chains}
We apply the shifting procedure to chains and generate a new pattern type. 
Let $\rmd\in\N$ and $\gamma,\eta\in\N$ with $\supp(\gamma)\cap\supp(\eta)=\emptyset$. 
We call $\eta + \CH(\gamma,\rmd)$ a \emph{shifted chain}. 
Using \cref{03-prop:mbody-shifted} we can represent the moment body $\mM(\rmK)_{\eta + \CH(\gamma,\rmd)}$ as the convex hull of  $\xmin{\eta}{\rmK}\mM(\rmK)_{\CH(\gamma,\rmd)}$ and $\xmax{\eta}{\rmK}\mM(\rmK)_{\CH(\gamma,\rmd)}$ and formulate a result for shifted chains in analogy to \cref{prop:mbody-chain-sdp}.
\begin{figure}[!h]
  \centering
  \includegraphics{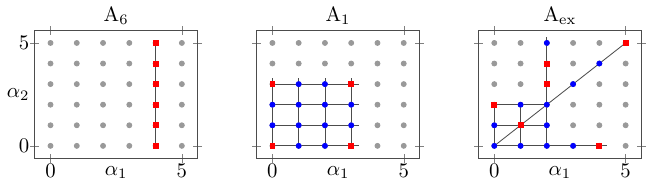}
  \caption{Visualization of different shifted chains for example sets $\rmA_6$, $\rmA_1$ and $\Aex$ as described in \cref{rmk:pattern-plot}. 
  \textbf{Left:} shifted chain $(4,0)+\CH(\bfe^2,5)=\rmA_6$;
  \textbf{Middle:} pattern family $\mF=\{(\rmi,0)+\CH(\bfe^2,3):\rmi\in [3]_0\}\cup\{(0,\rmi)+\CH(\bfe^1,3): \rmi\in [3]_0\}$ for $\rmA_1$;
  \textbf{Right:} pattern family $\mF=\{\CH(\1,5),\CH(\bfe^2,2), \CH(\bfe^1,4),(2,0)+\CH(\bfe^2,5),\bfe^2+\CH(\bfe^1,2),(0,2)+\CH(\bfe^1,2)$ for $\Aex$.}\label{fig:shifted-chains}	 
\end{figure}

\begin{corollary}
Let $\gamma,\eta\in\Nn\setminus\{\0\}$ have disjoint support and $\rmd>0$ an  integer. 
Let $g(\rmt)=(\xmax{\gamma}{\rmK}-\trmx)(\trmx-\xmin{\gamma}{\rmK})$. 
Then $\bfv\in\eta + \CH(\gamma,\rmd)$ if and only if there exists $\lambda\in[0,1]$ such that
\begin{align*}
    \lambda\bfv       &\in\{\xmin{\gamma}{\rmK}\bfw\in\R^{\Ndd}:\bfM_{2\rmd}(\bfv) \text{ is psd and } \bfM_{2\rmd-2}(g,\bfv) \text{ is psd}\},\\
    (1-\lambda)\bfv   &\in\{\xmax{\gamma}{\rmK}\bfw\in\R^{\Ndd}:\bfM_{2\rmd}(\bfv) \text{ is psd and } \bfM_{2\rmd-2}(g,\bfv) \text{ is psd}\}.
\end{align*}
\end{corollary}
\subsection{Generalizing Truncated Submonoids}
It is possible to expand the notion of truncated submonoids to generators $\Gamma=(\gamma^1, \dots, \gamma^{\rmk})\in\Z^{\rmn\times\rmk}$ by using the shifted truncated submonoids $\eta+\TS(\Gamma,\Nkdd)$ with $\eta\in 2\N$ such that 
\begin{align}\label{eq:expand-ts}
  \eta+\gamma^\rmi\ge 0\text{ for each }\rmi\in [\rmk].
\end{align}
For different choices of the parameters $\Gamma$, $\Nkdd$ and sets $\rmK$ one can apply Positivestellensätze that yield tractable characterizations of $\mM(\rmK)_{\eta+\TS(\Gamma,\Nkdd)}$.
For example let $\Gamma^1\in\Zn\bs 2\Zn, \Gamma^2\in\Z^{\rmn\times 2}\bs 2\Z^{\rmn\times 2}$ and $\Gamma^\rmk\in\Z^{\rmn\times\rmk}\bs 2\Z^{\rmn\times\rmk}$ be matrices whose columns have pairwise disjoint support and satisfy \eqref{eq:expand-ts}.
Then from \cite[Hilbert 1888]{marshall2008} it follows that $\mM(\Rn)_{\eta+\TS(\Gamma^1,\N_{2\rmd})}$, $\mM(\Rn)_{\eta+\TS(\Gamma^2,\N^2_{4})}$ and $\mM(\Rn)_{\eta+\TS(\Gamma^\rmk,\N^\rmk_{2})}$ can be represented by psd constraints of size  $\tbinom{1+\rmd}{\rmd}$, $\tbinom{2+2}{2}$ and $\tbinom{\rmk+2}{1}$, respectively.  
Furthermore, if $\Gamma^1\in 2\Zn$ satisfies \eqref{eq:expand-ts}, then $\mM(\Rn)_{\eta+\TS(\Gamma^1,\N_{2\rmd})}$ can be characterized by two psd matrices, one of size $\binom{1+\rmd}{\rmd}$, one of size $\binom{1+\rmd-1}{\rmd-1}$.
In particular, combining these representations of $\Gamma^1\in 2\Zn$ and $\Gamma^1\in\Zn\bs 2\Zn$ leads to a generalization of the underlying pattern of the sdsos certificate.
At last, if $\Gamma^1\in \Zn$ satisfies \eqref{eq:expand-ts}, then $\mM(\Box(\bfl,\bfu))_{\eta+\TS(\Gamma^1,\N_{2\rmd})}$ can be characterized using at most two psd matrices of size at most $\binom{1+\rmd}{\rmd}$. 
For that one has to determine whether the closure of $\{\m(\bfx)_{\Gamma^1}\in\R:\bfx\in\Box(\bfl,\bfu)\bs\{\0\})\}$ is $\R$, a semi-infinite interval, the union of two disjoint semi-infinite intervals or a bounded interval and then apply the respective Positivestellensatz: \cite[Hilbert 1888]{marshall2008}, \cite[Stieltjes 1885]{marshall2008}, \cite[Hausdorff 1921]{marshall2008} or \cite[Svecov 1885]{marshall2008}.
However, since we do not use any of these representations in the computations section, we do not pursue these patterns any further.

\section{Computational Results}
\label{sec:res}
Finding an unbiased setting to compare the advantages and disadvantages of convex relaxations for pop is not trivial, as models, their purpose, and methods are usually closely linked to one another. 
We decided to use a prototype implementation to compute solutions of \eqref{P-RLX} for different monomial pattern families. 
The solutions are used to approximate the size of the relaxations, and compared on a new benchmark library of random pop instances among another and to results from \baron, \yalmip and \cstssos.
We start by describing implementation and comparison details, before numerical results for different classes of instances are discussed.

\subsection{Implementation Details}

Four different solvers were run for the numerical evaluation on a compute server with 4 Intel(R) Xeon(R) Gold 6138 CPUs with 20 cores of 2 threads and 1 TB RAM each under Ubuntu 18.04.4.
Each solver-instance pair was assigned to one such job, i.e. the solvers themselves did not use the parallel structure.
In order to distribute the solver-instance pairs to the 80 cores we used \cite{tange_2020_4118697}.
We used \matlab~9.6.0.1174912 (R2019a) Update 5 \cite{matlab}, \mosek 9.2.32 \cite{mosek}, \julia 1.5.2 \cite{julia}, \cstssos version 1.00 \cite{MR4198579}, \baron~1.8.9 \cite{tawarmalani2005polyhedral}, and \yalmip 20200930 \cite{yalmip}.
All reported run times are wall-clock times. 
The code for solving the pattern relaxation \eqref{P-RLX} was implemented and run in \matlab and consists of roughly 3500 lines of code and uses \mosek to solve the relaxations.
The reported time is the termination time obtained from \mosek.
\baron \cite{tawarmalani2005polyhedral} was called from \matlab with default settings. 
\baron currently only returns the CPU time, when its \matlab interface is used.
Hence, we timed \baron calls with \texttt{MATLAB's} \texttt{tic} and \texttt{toc} commands\footnote{This method was suggested with the support of \baron.}. 
\cstssos is a \julia package that allows to exploit correlative sparsity and term sparsity simultaneously.
We called the first level of the hierarchy by running the command \texttt{cs\_tssos\_first} with settings \texttt{order}  
$=\lceil\tfrac{\deg(f)}{2}\rceil$ and \texttt{TS="MD"}. 
\cstssos does not report the time of the solution process.
Thus, we first piped the output from the sdp solver \mosek, that \cstssos uses, to a text file.
After that we read the termination time of \mosek from the text file.
Since only two decimal places are obtained this way, the time we report is only a proxy of the time of \texttt{CS-TSSOS's} actual \mosek call.
\yalmip is a \matlab toolbox that allows to compute the moment as well as the sos relaxation of \eqref{POP}. 
We run \texttt{YALMIP's solvesos} lowest possible  level of the sos hierarchy and  report the termination time obtained from \mosek.

\subsection{Setup of Numerical Comparisons}\label{subsec:setup}

As an indicator for the tightness of relaxations we approximate the size of feasible sets by their width.
For a given finite and nonempty set $\rmA\subseteq\Nn$ and a vector $\bff\in\RA$ we define the width function $\omega_{\mM(\rmK)_{\rmA}}(\bff)$ of $\mM(\rmK)_{\rmA}$ in direction $\bff$ as
\begin{align}\label{width-poly}
  \omega_{\mM(\rmK)_{\rmA}}(\bff) = \max_{\bfx\in\rmK} f(\bfx) - \min_{\bfx\in\rmK} f(\bfx).
\end{align}
Replacing $\rmK$ by a relaxation based on a pattern family $\mF$ one obtains an upper bound on the value of $\omega_{\mM(\rmK)_{\rmA}}(\bff)$, denoted by $\omega(\mF,\mM(\rmK)_{\rmA},\bff)$. 
The evaluation requires solving two instances of \eqref{P-RLX} for every pattern of interest, using the objective functions $\bsprod{-\bff}{\bfv}$ and $\bsprod{\bff}{\bfv}$, respectively.
To normalize the values $\omega_{\mM(\rmK)_{\rmA}}(\bff)$ and  $\omega(\mF,\mM(\rmK)_{\rmA},\bff)$
, we divide by the width function obtained for the (trivial) relaxation using the singletons-only pattern $\mF^{\sgl}_\rmA = \{\{\alpha\}:\alpha\in\rmA\}$, i.e., 
\begin{align}\label{width-normalize}
 \nu(\mF,\rmA,\bff) :=  \frac{\omega(\mF,\mM(\rmK)_{\rmA},\bff)}{\omega(\mF^{\sgl}_\rmA,\mM(\rmK)_{\rmA},\bff)} \quad\text{ and }\quad \nu_\rmA(\bff)    := \frac{\omega_{\mM(\rmK)_{\rmA}}(\bff)}{\omega(\mF^{\sgl}_\rmA,\mM(\rmK)_{\rmA},\bff)}.
\end{align}

Table~\ref{tab_methods} lists the methods and patterns that were used for the numerical results.
Method (B) gives an approximation of the reference solution, albeit at a high computational cost. (R) can be seen as the current state-of-the-art for a relaxation within a divide-and-conquer approach. Our approach allows to compare the new relaxation strategies (M), (C), (MC), (H), (T) with respect to the width function. 

\cref{boxplot:adversary,boxplot:dense,boxplot:sparse,boxplot:sparse-d4,boxplot:custom} show box plots of our numerical findings.   
  The box plots visualize the distributions (20 random vectors $\bff^\rmi$) of the normalized width functions \eqref{width-normalize} for various methods from \cref{tab_methods} computed with \baron, \yalmip and \cstssos.
  The title of a subplot corresponds to the exponent set $\rmA$. 
  Below the method (see \cref{tab_methods}) the rounded mean time in seconds is shown for the respective method.
  The box borders are the $\nicefrac{1}{4}$ and the $\nicefrac{3}{4}$-quantiles. 
  The lower whisker is the smallest data value which is larger than the lower quartile $-1.5$ times the interquartile range and the upper whisker accordingly.

\begin{table}[hb!!!] 
\begin{tabularx}{\textwidth}{|c|p{11.18cm}|}\hline
\lvlL\textbf{Label} & \lvlL\textbf{Description} \\ \hline
(B)   & Reference solution: To approximate $\omega_{\mM(\rmK)_{\rmA}}(\bff)$ we use the best upper bound for $\max_{\bfx\in\rmK} f(\bfx)$ and the best lower bound for $\min_{\bfx\in\rmK} f(\bfx)$ that \baron returns within a CPU time limit of $1000$ seconds each.  \\
(R)   & Root node relaxation of the \baron solver. \\
(CS)  & Reference solution obtained from solver \cstssos. \\
(Y)   & Reference solution obtained from \texttt{YALMIP's} sos method. \\
(SOS) & Self-implemented sos relaxation (that does not exploit sparsity) of the lowest hierarchy level. \\
(M)   & Relaxation based on the multilinear patterns $\mF^{\rmm}_\rmA$, which consists of the inclusion-maximal elements of $\set{\ML(\alpha,\{0,1\}^\rmn)}{\alpha\in\rmA\bs\{0\}}.$ \\
(S)   & Relaxation based on a family of shifted chains $\mF^{\rms}_\rmA$, which consists of the inclusion-maximal elements of 
        $$\set{\eta+\CH(\bfe^\rmi,\rmd)}{\rmd\in 2\N\bs\{0\}, \eta\in\Nn, \rmi\in [\rmn]},$$ 
    that satisfy  $\#(\eta+\CH(\gamma,\rmd))\cap\rmA\ge 2$ and $$\#(\eta+\CH(\gamma,\rmd))\cap\rmA > \#(\eta+\CH(\gamma,\rmd-1))\cap\rmA.$$ 
    The latter conditions ensure that each shifted chains contains at least two exponents from $\rmA$ and that we cannot include more exponents from $\rmA$ if we choose a bigger $\rmd$. \\
(C)   & Relaxation based on a family of chains $\mF^{\rmc}_\rmA$, which consists of the inclusion-maximal elements of 
        $$\set{\CH(\gamma,\rmd)}{\rmd\in 2\N\bs\{0\}, \gamma\in\Nn},$$ 
    that satisfy  $\#\CH(\gamma,\rmd)\cap\rmA\ge 2$ and $\#\CH(\gamma,\rmd)\cap\rmA > \#\CH(\gamma,\rmd-1)\cap\rmA.$  \\
(MC)  & Relaxation based on a family of multilinear patterns, chains and shifted chains,
        $$\mF^{\mc}_\rmA:= \mF^{\rmm}_\rmA \cup \mF^{\rmc}_{\bar{\rmA}} \cup  \mF^{\rms}_{\tilde{\rmA}}$$ 
        where $\bar{\rmA}:=\rmA_{\mF^{\rmm}_\rmA}$, $\tilde{\rmA}:=\rmA_{\mF^{\rmm}_\rmA \cup \mF^{\rmc}_{\bar{\rmA}}}$.\\
(H)   & Let $\md(\rmA):=\max(\set{\alpha_\rmi}{\alpha\in\rmA,\rmi\in [\rmn]})$ and $\Gamma:=\{\1,\bfe^1, \dots, \bfe^\rmn\}$. 
        A relaxation based on the family 
        $$\mF^{\rmh}_\rmA: = \set{\CH(\gamma,\md(\rmA))}{\gamma\in\Gamma}\cup\mF^{\rmm}_{ \set{\CH(\gamma,\md(\rmA))}{\gamma\in\Gamma}}\cup \mF^{\rmm}_{\rmA},$$ 
        which uses $\rmn+1$ chains that are linked by $\md(\rmA)$ multilinear patterns to strengthen $\mF^{\rmm}_\rmA$. \\
(T)   & Let $\rmd_1:=2\cdot\lceil\nicefrac{\deg(\rmA)}{2}\rceil$, $\rmd_2:=2\cdot\lceil\nicefrac{\deg(\rmA)}{4}\rceil $ and $\Gamma:=(2\bfe^1, \dots, 2\bfe^\rmn)$. 
        A relaxation based on the family $\mF^{\rmt}_\rmA$, which consists of the inclusion-maximal elements of 
        $$\set{\TS((\bfe^\rmi)_{\rmi\in\supp(\alpha)},\rmd_1)}{\alpha\in\rmA\bs \TS(\Gamma,\rmd_2) }\cup\{\TS(\Gamma,\rmd_2)\}.$$
        Here, $(\bfe^\rmi)_{\rmi\in\supp(\alpha)}$ is a matrix with columns $\bfe^\rmi, \rmi\in\supp(\alpha)$. 
        The family $\mF^{\rmt}_\rmA$ uses $\rmk$-variate truncated submonoids with $\rmk\le\rmd_1$ to cover the exponents in $\rmA$ and connects these chains using one $\rmn$-variate truncated submonoid. \\    
\hline
\end{tabularx}
\caption{Methods and relaxations to be compared in \cref{boxplot:adversary,boxplot:dense,boxplot:sparse,boxplot:sparse-d4,boxplot:custom}. 
 \label{tab_methods}}
\end{table}
\FloatBarrier

\subsection{Test Instances}

In our test instances, we use 13 finite exponents sets $\rmA \subseteq \Nn $ classified into four types: specially structured adversary sets, dense sets, sparse sets, and the example $\Aex$ from above. 
They are explained in the next subsection. For each exponent set we chose $\rmK=[0,1]^\rmn$ and 20 (uniform distributed) random coefficient vectors $\bff^1,\dots,\bff^{20}\in [-1,1]^\rmA$. 
The instances were a priori filtered to avoid trivial problems. 
If \baron did terminate on both the minimization and the maximization tasks in \eqref{width-poly} within the CPU time limit of 1000 seconds, the instance was replaced. 
Therefore the corresponding mean times for (B) are always at least $1000$ seconds. 

Our approach to generation of test instances is a search for instances that are interesting and realistic enough, on the one hand, but computationally challenging for the existing methods, on the other hand. That is, we wanted to study if existing convexification strategies can be improved on some interesting families of optimization problems. 

While we test our approach for \eqref{POP} on the unit box $\rmK=[0,1]^\rmn$ and with the objective functions having coefficients in $[-1,1]$, our approach is applicable without any changes for general objective functions and on arbitrary axis-aligned boxes. We also expect that the results of our numerical evaluations would be the same in this slightly more general setting.

%



\subsection{Numerical Results}\label{subsec:results}
In this subsection we describe the different exponent sets and present numerical results for the different methods from Table~\ref{tab_methods}.

\subsubsection{Adversary Exponent Sets}\label{subsec:adversary}
If a pattern family yields poor connectivity properties for an exponent set, we consider this set to be an adversary exponent set for this family. 
In subplot $\rmA_2$ from \cref{fig:multilinear-pattern}, for example, we see that the sparse family $\mF^{\rmm}_{\CH(\1^5,\rmd)}$ of multilinear patterns connects none of the original exponents. 
Hence, chain shaped exponent sets are natural adversaries for relaxations that only use multilinear patterns. 
As a result, the first two subplots in \cref{boxplot:adversary} show that the bounds using $\mF^{\rmm}_{\CH(\gamma,\rmd)}$ (M) coincide with the bounds obtained by the weakest pattern family $\mF^{\sgl}_{\CH(\gamma,\rmd)}$. 
On the other hand, it is not surprising that the bounds obtained by using one chain (C) match the reference solution (B).
The sparsity exploiting solver \cstssos as well as \texttt{YALMIP's} sos method fail to terminate for any of the 20 instances with exponent set $\CH(\1^4,10)$.
We suspect that the reason for this is that $\CH(\1^4,10)$ does not yield any term or chordal sparsity structures that can be exploited.
Thus, \cstssos and \yalmip  solve a regular sos relaxation for $\rmn=4$ and $\deg(\CH(\1^4,10))=40$, involving an sdp with a $\tbinom{24}{4}\times\tbinom{24}{4} = 10626\times 10626$ psd matrix. 

Another adversary exponent set for multilinear patterns is $\rmC(\rmn,\rmd):=\CH(\1^\rmn,\rmd)\cup\CH(\bfe^1,\rmd)\cup\dots\cup\CH(\bfe^\rmn,\rmd)$. It can be covered sparsely by $d$ multilinear patterns using the family $\mF^{\rmm}_{\rmC(\rmn,\rmd)}$. 
Each pattern of $\mF^{\rmm}_{\rmC(\rmn,\rmd)}$ connects $\rmn+1$ original exponents, but establishes no connection between monomials from different patterns. 
That is because two patterns $\rmP,\rmP'\in\mF^{\rmm}_{\rmC(\rmn,\rmd)}$ with  $\rmP\not=\rmP'$ satisfy $\rmP\cap \rmP' = \{\0\}$. The poor connective properties of $\mF^{\rmm}_{\rmC(\rmn,\rmd)}$ explain their poor performance, see (M) in \cref{boxplot:adversary}. 
By additionally using $\rmn+1$ chains to connect the $\rmd$ multilinear patterns, the family $\mF^{\rmh}_{\rmC(\rmn,\rmd)}$ exploits the structure of $\rmC(\rmn,\rmd)$. 
As a result, the resulting bounds of (H) and (B) are indistinguishable in \cref{boxplot:adversary}.
Again, \cstssos and \yalmip fail to terminate for any of the instances with exponent set $\rmC(4,10)$ -- most likely for the same reason as above.
\begin{figure}[htbp]
  \centering
  \includegraphics{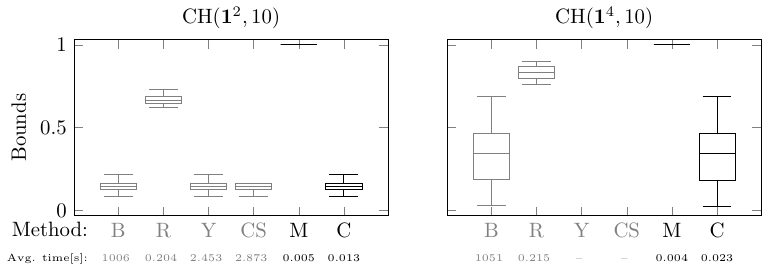}
  \includegraphics{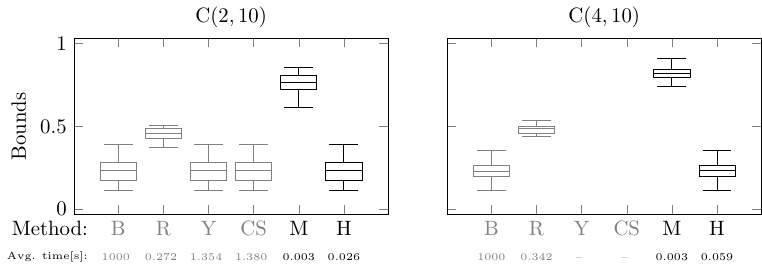}
  \caption{Visualization of the distributions (20 random vectors $\bff^\rmi$) of the normalized width functions for various methods from Table~\ref{tab_methods} as described in \cref{subsec:setup} for adversary exponent sets.
}\label{boxplot:adversary}
\end{figure}

\subsubsection{Dense Exponent Sets}\label{subsec:dense} 
We consider dense exponent sets $\rmA=\Nnd$ for $\rmn\in\{2,4\}$ and $\rmd=10$. 
The pattern families shown in \cref{boxplot:dense} perform reasonably well, probably due to their connectivity properties. Furthermore, we see that the multilinear patterns (M) perform for $\rmn=4$ drastically better than (R). This might be because the multilinear patterns $\ML(\alpha,\{0,1\}^\rmn)$ used in $\mF^{\rmm}_{\N_{10}^4}$ are bigger than the ones \baron uses, leading to more connections between monomial variables. 
\begin{figure}[h!!!]
  \centering
  \includegraphics{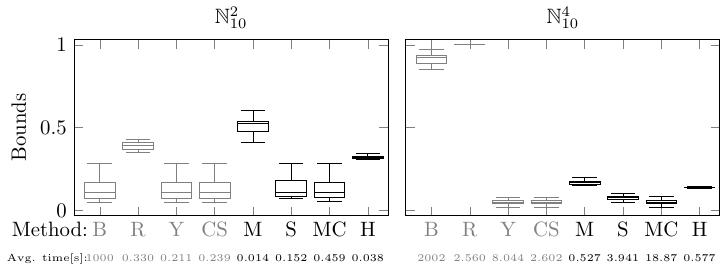}
  \caption{Visualization of the distributions (20 random vectors $\bff^\rmi$) of the normalized width functions for various methods from Table~\ref{tab_methods} as described in \cref{subsec:setup} for dense exponent sets.}\label{boxplot:dense}
\end{figure}

\subsubsection{Sparse Exponent Sets}\label{subsec:num-res-sparse}
We use randomly generated sparse exponent sets $\rmA=\rmS(\rmn,\rmd)$ to test pattern families that do not assume any structure of $\rmA$.
$\rmS(\rmn,\rmd)$ is generated by randomly picking $\left\lceil\sqrt{\tbinom{\rmn+\rmd}{\rmd}}\right\rceil$ exponents via \texttt{randperm} from $\N^{\rmn}_{\rmd}$.

\begin{figure}[h!!!]
  \centering
  \includegraphics{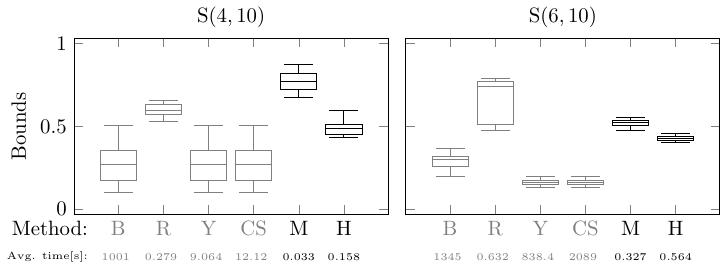}
  \caption{Visualization of the distributions (20 random vectors $\bff^\rmi$) of the normalized width functions for various methods from Table~\ref{tab_methods} as described in \cref{subsec:setup} for sparse exponent sets of degree at most 10.}\label{boxplot:sparse}
\end{figure}
\cref{boxplot:sparse} column (M) shows that $\mF^{\rmm}_{\rmS(\rmn,\rmd)}$ does not perform particularly well. 
Column (H) shows that additionally enforcing indirect connections between moment variables via $\rmn+1$ chains and $\rmd$ multilinear patterns in $\mF^{\rmh}_{\rmS(\rmn,\rmd)}$ results in tighter bounds. 

\begin{figure}[h!!!]
  \centering
  \includegraphics{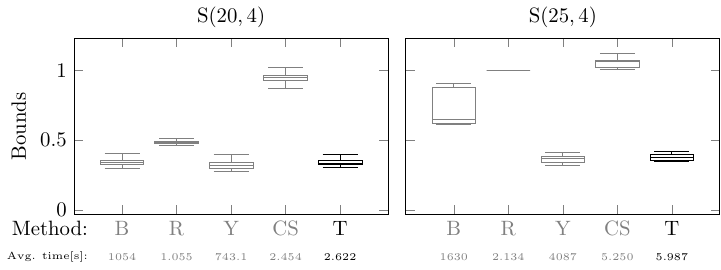}
  \includegraphics{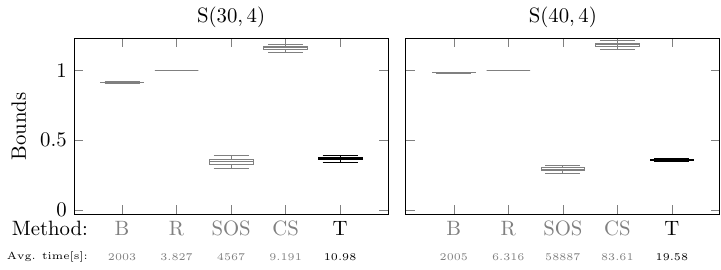}
  \caption{Visualization of the distributions (20 random vectors $\bff^\rmi$) of the normalized width functions for various methods from Table~\ref{tab_methods} as described in \cref{subsec:setup} for sparse exponent sets of degree at most 4.}\label{boxplot:sparse-d4}
\end{figure}
\cref{boxplot:sparse-d4} shows the distribution of the width for sparse instances with a high number of variables $\rmn=20,25,30,40$ and low degree $\rmd=4$. 
Computing lower bounds for the instances using relaxations that do not exploit sparsity of  $\rmS(\rmn,\rmd)$ involves severe computational cost.
We ran into memory problems with \yalmip for the instance with $\rmS(\rmn,4)$ when $\rmn\ge 35$.  
Thus, we used  (SOS), that is an own implementation  of an sos relaxation instead.
(The method (SOS) yields similar bounds to (Y) for $\rmn=20,25$ with average times $147.4$s and $935.8$s). 
Interestingly, the bounds computed by \cstssos are worse than the ones ones computed with the pattern family $\mF^{\sgl}_{\rmS(\rmn,4)}$.
It might be that using different settings for \cstssos yields better bounds.
However, this would also result in higher computation times.
The pattern strategy (T) yields for all tested $\rmn$ nontrivial bounds. 
Note that for $\rmn=20,25,30,40$ these bounds seem to be reasonably tight, when compared to (Y) or (SOS), but for a fraction of the computation time.

We want to point out that we were able to compute nontrivial bounds for instances with exponent sets $\rmS(80,4)$.
For these exponent sets the computation of one of the two optima involved in the definition of the width usually takes between 6-7 minutes.
The reason for the good performance in terms of computation time of (T) can be traced back to the relatively small size of the biggest involved $\rmr\times\rmr$ psd matrices in the relaxation of \eqref{P-RLX}. 
That is for $\mF^{\rmt}_{\rmS(\rmn,2\rmd)}= \set{\TS(\{\bfe^\rmi\}_{\rmi\in\supp(\alpha)},2\rmd)}{\alpha\in\rmS(\rmn,2\rmd)\bs \TS(\Gamma,2\rmd) }\cup\{\TS(\Gamma,\rmd)\}$ 
\[
  \rmr\le\max\Big\{\binom{\rmd+\min\{\rmn,2\rmd\}}{\min\{\rmn,2\rmd\}},\binom{\big\lceil\tfrac{\rmd}{2}\big\rceil+\rmn}{\rmn}\Big\}.
\]
For $2\rmd=4$ and $\rmn\ge 4$ this boils down to $\rmr\le\max\Big\{\tbinom{2+4}{4},\tbinom{1+\rmn}{\rmn}\Big\}$.
\subsubsection{Custom Strategies}\label{07-subsec:custom}
A customized pattern family $\mF$ for a given exponent set $\rmA$ allows to trade off computational cost versus tightness of the relaxation. 
Figure~\ref{fig:intro-pattern-configs} shows three example pattern families customized for $\Aex$ from \cref{ex_f}.
\begin{figure}[htbp]
  \centering
  \includegraphics{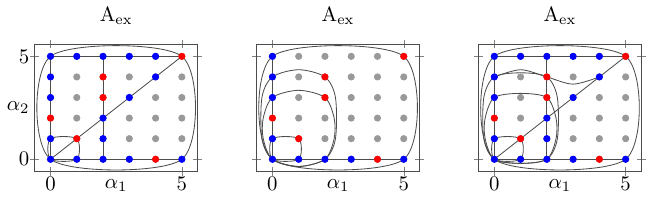}
  \caption{Visualization of different pattern families for the example set $\Aex$  as described in \cref{rmk:pattern-plot}.
  \textbf{Left:} the pattern family $\mF^1=\{\ML(\alpha,\{0,1\}^2):\alpha\in\{(1,1),(5,5)\}\}\cup\{\CH(\alpha,5): \alpha\in\{\bfe^1,\bfe^2,\1^2\}\}\cup\{(0,5) + \CH(\bfe^1,5),(2,0) + \CH(\bfe^2,5)\}$;
  \textbf{Middle:} the pattern family $\mF^2=\{\ML(\alpha,\{0,1\}^2):\alpha\in\{(1,1), (2,3), (2,4), (5,5)\}\}\cup\{\CH(\alpha,5): \alpha\in\{\bfe^1,\bfe^2\}\}$; 
  \textbf{Right:} the pattern family $\mF^3=\{\ML(\alpha,\{0,1\}^2):\alpha\in\{(1,1), (2,3), (2,4), (5,5)\}\}\cup\{\CH(\alpha,5): \alpha\in\{\bfe^1,\bfe^2,\1^2\}\}\cup \{(0,5) + \CH(\bfe^1,5), (2,0) + \CH(\bfe^2,5), (0,4) + \CH(2\bfe^1,2)\}$.
  }\label{fig:intro-pattern-configs}
\end{figure}
While the bounds obtained from $\mF^{2}$, see $\rmF^2$ in \cref{boxplot:custom}, are far from optimal, they are an improvement compared to $\mF_{\Aex}^{\rmm}$ in producing bounds similar to those obtained by (B).
\begin{figure}[h!!!]
  \centering
  \includegraphics{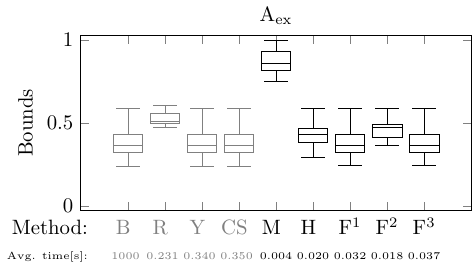}

  \caption{Visualization of the distributions (20 random vectors $\bff^\rmi$) of the normalized width functions for various methods from Table~\ref{tab_methods} as described in \cref{subsec:setup} for the set $\Aex$. The custom pattern families $\mF^1, \mF^2, \mF^3$ for $\Aex$ (labeled as $\rmF^\rmi$) are defined in \cref{fig:intro-pattern-configs}.}\label{boxplot:custom}
\end{figure}

\section{Conclusion}
\label{sec:conc}
We have presented a customizable framework for the relaxation of polynomial optimization problems over a box that is based on monomial patterns.
This framework allows inclusion and combination of existing approaches that were developed by different communities. 
In fact, various kinds of linearizations of multilinear terms, relaxations based on bound-factor products, dual versions of the relaxations of polynomial optimization problems based on sos, sdsos and sonc polynomials \emph{all} come with their particular type of pattern. 
The advantage of our approach is that by using patterns we can exploit the combinatorial structure of the set $\rmA$ of monomial exponents.
This is done by covering the monomial support $\rmA$ with a pattern family that reflects the structure of $\rmA$. 
Using patterns, we are able to avoid hard problem formulations by neglecting dependencies between certain monomials and instead focus on well-behaved and easy-to-describe dependencies between certain other monomials.
The results were high-quality and tractable relaxations of \eqref{POP}.

Our computational experiments provided numerical evidence for the benefits of using different generic as well as customized pattern relaxations.

These computed bounds could be further improved by techniques within divide-and-conquer frameworks such as \baron \cite{tawarmalani2005polyhedral}, \texttt{SCIP} \cite{scip}, \texttt{COUENNE} \cite{belotti2015couenne} or \texttt{LINDOGlobal} \cite{schrage2006optimization}, in a similar manner as already done with McCormick envelopes in the global optimization community. 

In particular, the more involved $\rmk$-variate truncated submonoids and its possible generalizations provide a way to use sos or moment methods to solve problems with polynomials of higher degree and with more variables. 
Choosing an appropriate set of generators of a truncated submonoid pattern, this pattern type could be used as an interface to combine sos methods with divide-and-conquer frameworks.
Furthermore, combining truncated submonoids with sparsity exploiting approaches such as chordal or term sparsity pose a way to further improve the run times. 

The numerical results also suggest that the connectivity properties of a pattern family have a major impact on the quality of the computed bounds.
This could be further investigated and exploited with hypergraph based approaches in the future.

How to efficiently generalize the approach to polynomial inequalities and to identify properties of instances in specific application areas that might benefit particularly from the new approach, are further open research questions.



\bibliographystyle{siamplain}
\bibliography{literature.bib}
\end{document}